\newtheorem*{facta}{Theorem A}
\newtheorem*{factb}{Theorem B}
\newtheorem*{factc}{Theorem C}
\newtheorem*{factal}{Lemma A}
\newtheorem{thrm}{Theorem}[section]
\newtheorem{lem}[thrm]{Lemma}
\newtheorem{prop}[thrm]{Proposition}
\newtheorem{cor}[thrm]{Corollary}
\theoremstyle{definition}
\newtheorem{definition}[thrm]{Definition}
\newtheorem{remark}[thrm]{Remark}
\numberwithin{equation}{section}
\newcommand{\D}{{\mathbf D}}
\newcommand{\B}{{\mathbf B}}
\newcommand{\N}{{\mathbf N}}
\newcommand{\R}{{\mathbf R}}
\newcommand{\beq}{\begin{equation}}
\newcommand{\eeq}{\end{equation}}
\newcommand{\beqs}{\begin{equation*}}
\newcommand{\eeqs}{\end{equation*}}
\newcommand{\ba}{\begin{align}}
\newcommand{\ea}{\end{align}}
\newcommand{\bas}{\begin{align*}}
\newcommand{\eas}{\end{align*}}
\newcommand{\bg}{\begin{gather}}
\newcommand{\eg}{\end{gather}}
\newcommand{\bgs}{\begin{gather*}}
\newcommand{\egs}{\end{gather*}}
\newcommand{\E}{\mathcal {E}}
\newcommand{\pr}{\mathcal {P}}
\newcommand{\z}{\zeta}
\newcommand{\x}{\xi}
\newcommand{\ta}{\theta}
\newcommand{\fa}{\mathbf{a}}
\newcommand{\infinity}{\infty}
\newcommand {\M} {\mathfrak{M}}
\newcommand {\hv} {h^\infty_v(\D)}
\newcommand {\hvb} {h^\infty_v(\B)}
\newcommand{\bl}{\mathcal B}
\newcommand{\p}{\phi}
\newcommand{\fraction}{\frac}
\begin{document}

\author{Kjersti Solberg Eikrem} 

\address{Department of Mathematical Sciences, Norwegian University of Science and
Technology, NO-7491 Trondheim, Norway}
\email{kjerstei@math.ntnu.no}

\subjclass[2010] {Primary 30B20; Secondary 31B05, 30H30, 42B05}
\keywords{Harmonic functions; random series; growth space;  Bloch-type space.}

\title
{Random harmonic functions in growth spaces and Bloch-type spaces} 

\begin{abstract}
Let $h^\infty_v(\mathbf D)$ and $h^\infty_v(\mathbf B)$ be the spaces of harmonic functions in the unit disk and  multi-dimensional unit ball
 which admit a two-sided radial majorant $v(r)$. 
  We consider functions $v $ that fulfill a doubling condition. In the two-dimensional case let $u (re^{i\ta},\xi) = \sum_{j=0}^\infty (a_{j0} \xi_{j0} r^j \cos j\theta +a_{j1} \xi_{j1} r^j \sin j\theta)$  where $\xi =\{\xi_{ji}\}
 $  is a  sequence of random subnormal variables and $a_{ji}$ are real; in higher dimensions we consider series of spherical harmonics.
We will obtain conditions on the coefficients  $a_{ji} $ which imply that $u$ is in $h^\infty_v(\mathbf B)$ almost surely.
Our estimate improves previous results by Bennett, Stegenga and Timoney, and we prove that the estimate is sharp. 
The results for growth spaces can easily be applied to Bloch-type spaces, and we obtain a similar characterization for these spaces, which generalizes results by Anderson, Clunie and Pommerenke and by Guo and Liu.  
\end{abstract}

\maketitle 

\section{Introduction}
\subsection {Spaces of harmonic functions} 
Let $v$ be a positive increasing continuous function on $[0,1)$, assume that $v(0)=1$ and $\lim_{r\rightarrow 1} v(r)=+\infty$. We study  growth spaces of harmonic functions in the unit disk $\D $ and also in the multidimensional unit ball $\B $ in $\R^n $. We denote
\begin{equation*}
h^\infty_v (\D)=\{u:\D\rightarrow\R, \Delta u=0, |u(x)|\le Kv(|x|)\ {\text{for some}}\ K>0\},
\end{equation*}
and  $\hvb $ is defined similarly.
The study of harmonic growth spaces on the disk and the corresponding spaces of analytic functions $A_v^\infinity $ was  initiated  
by L. Rubel and A. Shields in \cite{RS} and by A. Shields and D. Williams in \cite{SW1,SW}.
Recently multidimensional analogs were considered in \cite{AD,EM}. Various results on the coefficients of functions in growth spaces were obtained in \cite{BST}. Hadamard gap series in growth spaces have been studied by a number of authors, see \cite {E} and references therein.

Examples of functions in $\hv $   can be constructed by lacunary series, see \cite {E}.  Another way to construct examples is by using random series, and such functions will be the
main focus of this paper. 
We consider
\beq\label {u}
u (re^{i\ta},\x) = \sum_{j=0}^\infty (a_{j0} \x_{j0} r^j \cos j\ta +a_{j1} \x_{j1} r^j \sin j\ta)
\eeq
 where $\x=\{\x_{ji}\}
 $  is a  sequence of  independent random variables and $\fa_j := (a_{j0},a_{j1}) \in \R^2$. 
  We will also study random harmonic functions on $\B$; 
 such functions can be written as
\begin {equation}
\label {un}
u(x,\x) =\sum_{m=0}^\infty \sum_{l=0}^{L_m} a_{ml} \x_{ml} r^{m} Y_{ml} \left(\fraction {x} {r}\right) 
\end {equation}
where $r = |x | $, $\{L_m\}$ depends on $n $  and $Y_{ml} $  are spherical harmonics of 
degree $m $ normalized to fulfill $| |Y_{ml} | |_\infinity\le 1 $. 
Our main results will be proven in several dimensions.

We always assume that the
 weights  satisfy the doubling condition  \begin{equation}
\label{vdouble}
v(1-d)\le Dv(1-2d).
\end{equation} 
Typical examples  are $v (r)=\left(\fraction1 { 1 -r }\right)^\alpha $ and $v (r)=
 \max\left\{1,\left(\log\fraction1 { 1 -r }\right)^\alpha \right\}$ for $\alpha>0 $.   
  For convenience we define a new function $g:[1,\infty)\rightarrow[1,\infty)$ such that $g(x)=v(1-\fraction1x)$. Then (\ref{vdouble}) is equivalent to
\begin{equation}
\label{gdouble}
g(2x)\le Dg(x).
\end{equation}
We will use $v$ and $g $ interchangeably.

The Bloch space is the space of analytic functions $f $ on $\D$ satisfying $$ |f (0) |+ \sup_{z\in\D}(1 - |z |^2)|f' (z)|   <\infty. $$
The generalizations of this space where $1 - |z |^2$ is replaced by another weight $w(|z|) $ which is decreasing and fulfills $\lim_ {r\rightarrow 1^-} w(r)=0$ are called Bloch-type spaces.
A harmonic function $u$ is in the Bloch-type space $\bl_w$ if
\begin {equation*}
| |u ||_{\bl_w} =  |u (0) |+\sup_{z\in\D} w(|z|)|\nabla u (z)|  <\infty. 
\end {equation*}
Random Bloch functions have been studied by J. M. Anderson, J. Clunie and Ch.~Pommerenke in \cite {ACP} and F. Gao in \cite {G}.


\subsection {Known results}
Let $\fa_j = (a_{j0},a_{j1}) \in \R^2$ and  $|\fa_j |=(|a_{j0}|^2+|a_{j1}|^2)^{1/2}$. 
It is not difficult to show that if $u(re^{i\ta}) = \sum_{j=0}^\infty (a_{j0}  r^j \cos j\ta +a_{j1} r^j \sin j\ta) \in h_v^\infty(\D)$, then 
\beq 
\label {square sum one}
\sum_{j=0}^{n}|\fa_j|^2\le Bg( n)^2,
\eeq
see for example \cite {BST}.
On the other hand, the inequality
\beq
\label {absolute values}
\sum_{j=0}^{n}|\fa_j|\le Bg( n)
\eeq
is sufficient to imply that $u\in \hv $, but it is not necessary. In the special case of Hadamard gap series, \eqref{absolute values} is both necessary and sufficient, see \cite{E}, and this is also the case when all the coefficients are positive 
\cite {BST}. But it is not possible in general to characterize all functions in $\hv $ by the absolute value of their coefficients.
We will obtain conditions on the coefficients  which imply that $u$ defined by \eqref{u} is in $\hv $ almost surely, and similarly in higher dimensions.

Let the partial sums of $u (re^{i\ta}) = \sum_{j=0}^\infty (a_{j0}  r^j \cos j\ta +a_{j1} r^j \sin j\ta) 
$ be denoted as follows
$$(s_n u)(re^{i\ta})=\sum_{j=0}^{n -1}(a_{j0}  r^j \cos j\ta +a_{j1} r^j \sin j\ta) 
 $$
 and denote the corresponding Ces\`{a}ro means by
$$(\sigma_n u)(re^{i\ta})=\fraction1n \sum_{j=0}^{n- 1}  (s_ju) (re^{i\ta}) 
= \sum_{j=0}^{n- 1} \left(1-\fraction{j}n\right) (a_{j0}  r^j \cos j\ta +a_{j1} r^j \sin j\ta) 
.$$
By Theorem 3.4 in \cite[p. 89] {Z}, the maximum of the Ces\`{a}ro means is less than or equal to the maximum of the function,
\beq
\label{ces}
\max_{\ta
} |u (re^{i\ta})
|\ge\max_{\ta} |(\sigma_n u)(re^{i\ta})|\qquad\text { for every $n$}.   
\eeq
Although functions in $\hv $ cannot be characterized by the coefficients alone, they can be characterized by 
their Ces\`{a}ro means, the following is Theorem 1.4 in \cite{BST}:
\begin {facta} 
\label {bst}
Assume $v $ satisfies \eqref{vdouble}. 
If $u $ is a harmonic function on the unit disk, then $u\in\hv $ if and only if $\|\sigma_n u\|_\infinity \le Cg (n) $ for all $n\ge 1 $ and some constant $C\ge 0 $. 
\end {facta}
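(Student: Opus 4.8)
The plan is to prove both implications, the workhorses being the maximum principle \eqref{ces} for Ces\`{a}ro means, a summation-by-parts identity that recovers $u$ from its boundary Ces\`{a}ro means, and the doubling hypothesis \eqref{gdouble}. I evaluate $s_n u$ and $\sigma_n u$ on the boundary and abbreviate $A_n(\ta)=a_{n0}\cos n\ta+a_{n1}\sin n\ta$, so that $u(re^{i\ta})=\sum_{n\ge0}A_n(\ta)r^n$ and $(s_nu)(e^{i\ta})=\sum_{k<n}A_k(\ta)$. Setting $T_n(\ta)=n(\sigma_nu)(e^{i\ta})$ with $T_0=0$, the relations $T_{n+1}-T_n=(s_{n+1}u)(e^{i\ta})$ and $(s_{n+1}u-s_nu)(e^{i\ta})=A_n$ let me sum by parts twice to obtain
\beqs
u(re^{i\ta})=(1-r)^2\sum_{n=1}^\infty n\,r^{n-1}\,(\sigma_n u)(e^{i\ta}).
\eeqs
I would establish this first, verifying it on the constant and first-harmonic cases; the boundary terms vanish because \eqref{gdouble} forces $g$, and hence $\|\sigma_n u\|_\infty$ and $\{T_n\}$, to grow at most polynomially, so every rearrangement is legitimate by absolute convergence for fixed $r<1$.

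For sufficiency I assume $\|\sigma_n u\|_\infty\le Cg(n)$ for all $n$. The displayed identity gives $|u(re^{i\ta})|\le C(1-r)^2\sum_{n\ge1}nr^{n-1}g(n)$, so it is enough to show $(1-r)^2\sum_{n\ge1}nr^{n-1}g(n)\ls g\big(\tfrac1{1-r}\big)=v(r)$, the last equality being exact since $g(x)=v(1-1/x)$. I would split at $N_0=1/(1-r)$: the part $n\le N_0$ is $\le v(r)(1-r)^2\sum_{n\ge1}nr^{n-1}=v(r)$ because $g$ is increasing, while for the tail I group dyadically, $n\in[2^jN_0,2^{j+1}N_0)$, bound $g(n)\le D^{j+1}v(r)$ by \eqref{gdouble}, and use $r^{N_0}\approx e^{-1}$ together with $(1-r)^2\sum_{n\ge M}nr^{n-1}\ls(1+(1-r)M)r^{M}$ to see the $j$-th block contributes $\ls D^{j+1}2^je^{-2^j}v(r)$; the series in $j$ converges, yielding $u\in\hv$.

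For necessity I assume $u\in\hv$, so $|u(re^{i\ta})|\le Kv(r)$. Fix $n$ and put $r_n=1-1/n$. Applying \eqref{ces} at radius $r_n$ gives $\max_\ta|(\sigma_n u)(r_ne^{i\ta})|\le\max_\ta|u(r_ne^{i\ta})|\le Kv(r_n)=Kg(n)$, and it remains to pass from this interior bound to the boundary sup $\|\sigma_n u\|_\infty$. Since $\sigma_n u$ is a harmonic polynomial of degree $<n$ and $r_n^{\,n}=(1-1/n)^n\gs1$, I would prove a reverse-Poisson estimate: there is a kernel $\kappa_n\in L^1(\T)$ with $\widehat{\kappa_n}(j)=r_n^{-|j|}$ for $|j|<n$ and $\|\kappa_n\|_{L^1}\le C$ uniformly in $n$, whence $\sigma_n u=\kappa_n*\big((\sigma_n u)(r_n\,\cdot\,)\big)$ and $\|\sigma_n u\|_\infty\le C\max_\ta|(\sigma_n u)(r_ne^{i\ta})|\le CKg(n)$. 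Concretely I take $\kappa_n(\ta)=\sum_j\phi(|j|/n)\,r_n^{-|j|}e^{ij\ta}$ for a fixed smooth $\phi$ equal to $1$ on $[0,1]$ and supported in $[0,2]$; writing $\phi(|j|/n)r_n^{-|j|}=F(j/n)$ with $F(t)=\phi(|t|)\exp\!\big(|t|\,n\log\tfrac{n}{n-1}\big)$, the function $F$ is compactly supported and, uniformly in $n$, smooth except for a single corner at the origin, so $\widehat F(\xi)=O(\xi^{-2})$ lies in $L^1(\R)$, and Poisson summation gives $\|\kappa_n\|_{L^1(\T)}\le\|\widehat F\|_{L^1(\R)}\le C$.

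The main obstacle is this reverse-Poisson estimate with a constant independent of $n$; everything else is either the bookkeeping of the summation-by-parts identity or the routine dyadic sum controlled by \eqref{gdouble}. The square-sum bound \eqref{square sum one} enters only to confirm the polynomial growth that legitimizes the rearrangements in the identity.
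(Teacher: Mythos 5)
The paper does not prove Theorem A at all --- it is quoted as Theorem 1.4 of \cite{BST} --- so there is no internal proof to compare against; I will therefore judge your argument on its own. It is correct in substance and is essentially the classical argument. The sufficiency half (reconstruct $u$ from the boundary Ces\`{a}ro means by a double summation by parts, then split $(1-r)^2\sum_n nr^{n-1}g(n)$ at $N_0=1/(1-r)$ and sum dyadic blocks using \eqref{gdouble}) is precisely the mechanism the author uses in the proof of Theorem 3.1, displays \eqref{to n} and \eqref{from n}, with Ces\`{a}ro means in place of partial sums. The necessity half correctly isolates the real content: \eqref{ces} at radius $r_n=1-1/n$ gives $\max_\ta|(\sigma_nu)(r_ne^{i\ta})|\le Kv(r_n)=Kg(n)$ for free, and everything hinges on transferring this to the boundary sup for a harmonic polynomial of degree $<n$. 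Your smoothed reverse-Poisson kernel does achieve this: $c_n=n\log\frac{n}{n-1}$ lies in $[1,2\log2]$ for $n\ge2$, so $F(t)=\phi(|t|)e^{c_n|t|}$ has uniformly bounded distributional second derivative (a bounded density plus a point mass of mass $2c_n$ at the corner), hence $|\widehat F(\xi)|\le C\min(1,\xi^{-2})$ uniformly in $n$, and Poisson summation yields $\|\kappa_n\|_{L^1(\T)}\le C$; since $\widehat{\kappa_n}(j)=r_n^{-|j|}$ on the spectrum of $\sigma_nu$, the convolution identity and Young's inequality finish the job. (Be aware that the tempting shortcut of splitting into analytic and anti-analytic parts and using the exterior maximum principle does not work uniformly, because the Riesz projection is unbounded on $L^\infty$; one would instead need Bernstein--Szeg\H{o} for the radial derivative. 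Your kernel avoids that issue.)

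Two small repairs. First, the summation-by-parts identity is off by an index: with the paper's conventions $s_0u=0$ and $T_{n+1}-T_n=s_nu$ (not $s_{n+1}u$), and testing on $u\equiv1$ gives $(1-r)^2\sum_{n\ge1}nr^{n-1}\sigma_n=r$, so the correct identity is $r\,u(re^{i\ta})=(1-r)^2\sum_{n\ge1}nr^{n-1}(\sigma_nu)(e^{i\ta})$. The stray factor of $r$ is harmless (for $r\le1/2$ the bound $|u|\le Kv$ is trivial from continuity and $v\ge1$), but it should be stated correctly. Second, you cannot invoke \eqref{square sum one} to legitimize the rearrangement in the sufficiency direction, since that bound presupposes $u\in\hv$, which is exactly what is being proved there; instead, read off the Fourier coefficients of $\sigma_{2k}u$ from the hypothesis to get $|\fa_k|\le4Cg(2k)$, which supplies the polynomial growth needed for absolute convergence at each fixed $r<1$.
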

If we consider the partial sums instead, then $u\in \hv $ only implies that $$\|s_n u\|_\infinity\le C g (n)\log n,$$ and this result is sharp,  see \cite {BST}.

Random Taylor series is a fascinating subject in harmonic analysis, we refer the reader to \cite{Ka} for an excellent introduction sixto the subject and further references.  One of the central results that we use goes back to R.~Salem and A.~Zygmund \cite{SZ}; it gives an estimate for the distribution function of a random polynomial. In \cite{SZ} trigonomertic  polynomials of the form $\sum_{j=0}^N\x_ja_j\cos j\phi$ are considered, where $\x_j$ is a Rademacher sequence (a sequence of independent random variables which take the values $1 $ and $-1 $ with equal probability) or a Steinhaus sequence (a sequence $\{e^{i\varphi_j}\}$ where $\varphi_j $ are 
independent and have uniform distribution in $[0,2\pi]) $. In \cite{Ka} the corresponding result is generalized to other series and  subnormal random sequences (which include both Rademacher and  Gaussian sequences and the real part of Steinhaus sequences).  

Conditions on the coefficients of random Taylor series of analytic functions in various functions spaces have been studied previously in \cite{ACP} and \cite {BST}. 
In  \cite{ACP} J.~M.~Anderson, J.~Clunie and Ch.~Pommerenke showed 
that if $c_j\ge 0 $, $\{e^{i\varphi_j}\}$   is a 
Steinhaus sequence and
\beq
\label {bloch}
\left(\sum_{j=0}^n j^2 c_j^2\right)^{1/2}=O\left (\fraction {n} {\sqrt{\log n}}\right), 
\eeq
 then $f (z, \varphi) =\sum_{j=0}^\infty c_j e^{i\varphi_j} z^j$ belongs to the Bloch space almost surely. 

F. Gao characterized Bloch functions for the case where 
the random sequence is a Rademacher sequence; the results give necessary and sufficient conditions  for 
a function to be a Bloch function almost surely,
 see \cite {G}. The conditions are given in terms of non-decreasing rearrangements.

Let $A_v^\infinity $ denote the space of analytic functions which fulfill $|u(z)|\le Kv(|z|)$ for some $K $. 
In \cite {BST} 
G. Bennett, D. A. Stegenga and R. M. Timoney proved the following: 
\begin{factb}
\label{bst2}
If $\{c_j\}_{j =0}^\infinity $ is a sequence satisfying
\beqs
\left(\sum_{j=0}^n |c_j|^2\right)^{1/2} \le C\fraction {g (n)} {\sqrt{\log n}}, 
\eeqs
and $\{e^{i\varphi_j}\}_{j=0}^\infinity $   is a Steinhaus sequence,  
then $\sum_{j=0}^\infty  c_j e^{i\varphi_j} z^j \in A_v^\infinity$ almost surely. 
\end{factb}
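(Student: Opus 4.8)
The plan is to control the maximum modulus $M(r)=\max_{\theta}|f(re^{i\theta},\varphi)|$ on the circles $|z|=r$ and to show that, almost surely, $M(r)\le Kv(r)$ for all $r$ near $1$. Since $M(r)$ is nondecreasing in $r$ by the maximum modulus principle and $v$ satisfies the doubling condition \eqref{vdouble}, it is enough to prove the bound along the dyadic radii $r_k=1-2^{-k}$: if $M(r_k)\le Kg(2^k)$ for all large $k$, then for $r\in[r_k,r_{k+1}]$ one has $M(r)\le M(r_{k+1})\le Kg(2^{k+1})\le KDg(2^k)\le KD\,v(r)$, because $v(r)\ge v(r_k)=g(2^k)$. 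Thus the statement reduces to an almost sure bound on $M(r_k)$.

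First I would split the series at frequency $N_k\approx 2^kk$, writing $f(r_ke^{i\theta},\varphi)=P_k(\theta)+T_k(\theta)$, where $P_k$ collects the terms with $j\le N_k$ and $T_k$ is the tail. For the tail the trivial bound $\max_\theta|T_k|\le\sum_{j>N_k}|c_j|r_k^{\,j}$ suffices, together with $|c_j|\le(\sum_{i\le j}|c_i|^2)^{1/2}\le Cg(j)/\sqrt{\log j}$ and the fact that doubling forces at most polynomial growth, $g(j)\lesssim(j2^{-k})^{\log_2D}g(2^k)$ for $j\ge 2^k$. Since $r_k^{\,j}\le e^{-j2^{-k}}$ and $j>N_k$ forces $j2^{-k}>k$, the exponential decay dominates every polynomial factor and the whole tail is $o(g(2^k))$; this is a deterministic estimate valid for every realization, so $T_k$ is negligible.

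The heart of the matter is the random polynomial $P_k(\theta)=\sum_{j\le N_k}(c_jr_k^{\,j})e^{i\varphi_j}e^{ij\theta}$, to which I would apply the Salem--Zygmund distribution estimate. This needs control of $\sigma_k=(\sum_{j\le N_k}|c_j|^2r_k^{2j})^{1/2}$, and the key computation is the weighted bound $\sum_j|c_j|^2r^{2j}\lesssim g(1/(1-r))^2/\log(1/(1-r))$. Writing $S_n=\sum_{j\le n}|c_j|^2\le C^2g(n)^2/\log n$ and using Abel summation gives $\sum_j|c_j|^2r^{2j}=(1-r^2)\sum_j S_jr^{2j}$, after which one estimates the weighted sum of the $S_j$ by splitting into dyadic blocks and invoking doubling to see that the block nearest $1/(1-r)$ dominates. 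At $r=r_k$ this yields $\sigma_k\lesssim g(2^k)/\sqrt{k}$, and since $\log N_k\approx k$ the Salem--Zygmund bound $\mathbb{P}(\|P_k\|_\infty>\lambda\sigma_k\sqrt{\log N_k})\le CN_k^{\,1-c\lambda^2}$ becomes $\mathbb{P}(\|P_k\|_\infty>C\lambda g(2^k))\le C2^{\,k(1-c\lambda^2)}$.

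Finally I would fix $\lambda$ so large that $c\lambda^2>1$, making these probabilities summable in $k$, and invoke the Borel--Cantelli lemma to conclude that almost surely $\|P_k\|_\infty\le C\lambda g(2^k)$ for all but finitely many $k$. Together with the deterministic tail bound and the doubling interpolation of the first paragraph this gives $M(r)\le Kv(r)$ almost surely, that is $f\in A_v^\infty$ almost surely. I expect the main obstacle to be the $\ell^2$ estimate for $\sigma_k$: one must use the doubling of $g$ to turn the hypothesis on the partial sums $S_n$ into the weighted bound, while choosing the truncation level $N_k$ large enough that the tail disappears yet small enough ($\log N_k\approx k$) that Salem--Zygmund produces exactly $g(2^k)$ and not a spurious extra power of $k$.
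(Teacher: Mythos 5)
Your argument is correct, but it is organized differently from the paper's. The paper treats Theorem B as a special case of its own machinery (Remark 3.2 together with the proofs of Theorems 3.1 and 6.2): there the Salem--Zygmund/Kahane estimate (Theorem C, with $\kappa=2n^2$ and Borel--Cantelli over $n$) is applied to the \emph{boundary} partial sums $S_n(\theta)=\sum_{j\le n}c_je^{i\varphi_j}e^{ij\theta}$, yielding almost surely $\|S_n\|_\infty\le C\bigl(\sum_{j\le n}|c_j|^2\log n\bigr)^{1/2}+B_\xi\le Cg(n)+B_\xi$ for every $n$, and the radius enters only afterwards through Abel summation and the estimate $(1-r)\sum_k g(k)r^k\lesssim g(1/(1-r))$ of \eqref{to n}--\eqref{from n}. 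You instead fix dyadic radii $r_k$, fold the radial damping $r_k^{2j}$ into the $\ell^2$ norm before invoking Salem--Zygmund, and pay for the truncation with a separate deterministic tail estimate and the calibrated choice $N_k\approx 2^k k$ so that $\log N_k\asymp k$ does not spoil the $1/\sqrt{k}$ gain in $\sigma_k$. Both routes rest on the same two ingredients (a Salem--Zygmund distribution bound plus doubling of $g$); the paper's version is slightly cleaner in that the almost sure bound on all partial sums is a single intermediate statement and Abel summation then covers all radii at once with no truncation bookkeeping, whereas your version localizes the probabilistic input to one polynomial per dyadic scale, closer in spirit to the original Salem--Zygmund arguments, and never needs to control every partial sum. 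Your key weighted estimate $\sum_j|c_j|^2r^{2j}\lesssim g(1/(1-r))^2/\log(1/(1-r))$ is the exact analogue of \eqref{from n} with $g(k)$ replaced by $g(k)^2/\log k$, which is still essentially doubling, so the dyadic-block argument you invoke does go through (with the usual caveat that the finitely many terms with $\log j$ small are absorbed into a constant, and the finitely many exceptional $k$ from Borel--Cantelli into the random constant $B_\xi$).
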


\subsection {Contents and organization of this paper}
In this paper we consider random functions given by \eqref {u} or more generally by  \eqref {un} with a random subnormal sequence $\xi_{ml}$.  
The reason for considering subnormal sequences is that they include both Rademacher and normalized Gaussian sequences, and the proofs are based only on the fundamental inequality $\mathcal E(e^{\lambda \xi})\le e^{\lambda^2/2}$ that is used to define subnormal sequences.

The main result of the paper is a sufficient condition on the coefficients $\{a_{ml}\}$ under which the random series \eqref {un} belongs to $h^\infty_v(\B)$ almost surely. As a consequence of this result we obtain a generalization of Theorem B to harmonic functions of several variables. In dimension 2 our main result is similar to Theorem B, but instead of summing all coefficients from $0 $ to $n $, we sum coefficients between $n_{k -1} $ and $n_k $ for some sequence $n_k $ that depends on $g$.  In this way we obtain results also in the case when $g $ grows more slowly than $\sqrt{\log x} $. 

Usually we start with a weight $v$ and ask for conditions on the coefficients $a_{ml}$ that guarantee that  the function defined by \eqref {un} is in $h_v^\infty$ almost surely. Another way to look at the result is by starting with a sequence of coefficients $\{a_{ml}\}$ and asking for the correct order of growth of typical functions given by  \eqref {un} .  We give some examples and show that in some cases our main result gives a better (more slowly growing) estimate than Theorem B.

In section \ref {motivation} we collect necessary definitions and preliminary results, and we also formulate a statement which illustrates how adding randomness to the coefficients influences the growth of the function.  The main result and some corollaries are given in section 3. In section 4 we show that  the main result is sharp (in some sense). We also prove some necessary conditions on the coefficients of functions in $\hv $ in section \ref {necessary section}. Our results can be applied to random functions in Bloch-type spaces and analytic growth spaces, and we obtain similar results for such functions in section~\ref {application}.

\section{Motivation and preliminaries} 
\label {motivation}
\subsection {Subnormal variables}
We will now consider random functions given by \eqref {u} and \eqref {un} where $\x=\{\x_{ji}\}$  is a  sequence of random variables. 
We will restrict ourselves to subnormal variables. 
 
\begin {definition}
A real-valued random variable  $\omega $ is called {\it subnormal} if
$$\mathcal {E} (e^{\lambda \omega})\le e^{\lambda^2/2}\qquad \mathrm{for\, all}\quad-\infinity <\lambda <\infinity.$$
A sequence of independent subnormal variables is called a {\it subnormal sequence}.
\end {definition}

The random variable that takes the values $1 $ and $-1 $ with equal probability is subnormal since  $\E(e^{\lambda \omega}) =\fraction12 (e^{\lambda} +e^{-\lambda})\le e^{\fraction12\lambda^2} $. A Rademacher 
sequence is the sequence of independent variables with such a probability distribution,  thus it is a subnormal sequence. 
Any real random variable $\omega $ with $\E (\omega) = 0$ and $|\omega | \le 1$ a.s. is subnormal.
 A Gaussian normal variable is subnormal if $\E (\omega) = 0$ and $Var (\omega) \le 1 $; see \cite[p. 67] {Ka} and \cite [p. 292]{Strom} for more on subnormal variables.
 
 Unlike Rademacher and Steinhaus variables, subnormal  variables are not necessarily symmetric.

\subsection {Deterministic and random series in growth spaces}
The result below illustrates that the random  sequence influences the growth of the function. If the growth restriction on the coefficients is strong enough, we can get a result that implies that the function is in $\hv $.
Another assumption implies that the function is in $\hv $ almost surely.  The last point of the proposition concerns a function with large (carefully chosen) coefficients, for which the choice 
 of signs still makes the function belong to $\hv $. The coefficients are large in the sense that $\sum_{j=0}^n a_j^2\ge Cg (n)^2 $ for  some $C$, and this is as large as they can be according to \eqref {square sum one}.

Let  $n_0 =1 $ 
and for some $A>1$ 
define $n_k $ by induction as 
\beq
\label {n}
n_{k+1}=\min \{l \in \N: g(l)\ge A g(n_k)\}.
\eeq
Choose $A $ large enough to make $n_k\ge2 n_{k-1} $. This way of defining a sequence  $\{n_k\} $ will be used several times.
In particular, if $v (r)=\left(\fraction1 { 1 -r }\right)^\alpha $ or $
  \max\left\{1,\left(\log\fraction1 { 1 -r }\right)^\alpha \right\}$, 
 we can choose $n_k=2^k$ and $n_k=2^{2^k} $, respectively.

\begin {prop}
\label {three things}
Let $u (re^{i\ta},\x) = \sum_{j=0}^\infty (a_{j0} \x_{j0} r^j \cos j\ta +a_{j1} \x_{j1} r^j \sin j\ta) 
$.\\
(i) If $|\fa_j |\le \fraction {g(n_k)} {n_k}$ for $n_{k-1} <j\le n_{k} $, then  $u (z,\x)\in \hv $ for all sequences $\{\x_{ji}\} $ with $\x_{ji}\in\{- 1,1\} $.\\
(ii) If $|\fa_j|\le\fraction {g(n_k)}{\sqrt{n_k\log n_k}} $ for $n_{k -1} <j\le n_{k} $ and $\{\x_{ji}\} $ is a subnormal sequence, then $u (z,\x) \in \hv $ almost surely.\\
 (iii) If $a_j=\fraction {g(n_k)}{\sqrt{n_k} } $ for $n_{k-1} <j\le n_{k} $, then there exists a  sequence $\{\x_j\} $  with $\x_j\in\{-1,1\} $ such that  $u (z,\x)= \sum_{j=0}^\infty a_j \x_j r^j \cos j\ta \in \hv $.\\
\end {prop}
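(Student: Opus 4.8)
The plan is to handle all three parts through one reduction, isolating a single per‑block estimate that each part then supplies by different means. Write $u=\sum_{k\ge1}\tilde Q_k$, where $\tilde Q_k(re^{i\ta})=\sum_{n_{k-1}<j\le n_k}(a_{j0}\x_{j0}r^j\cos j\ta+a_{j1}\x_{j1}r^j\sin j\ta)$ collects the frequencies of the $k$-th block. Setting $c_j=a_{j0}\x_{j0}-ia_{j1}\x_{j1}$ we have $\tilde Q_k(re^{i\ta})=\mathrm{Re}\sum_{n_{k-1}<j\le n_k}c_j z^j$ with $z=re^{i\ta}$; factoring out $z^{n_{k-1}}$ and applying the maximum modulus principle to the analytic polynomial $\sum c_j z^{j-n_{k-1}}$ gives the key decay estimate
\beq
\max_\ta|\tilde Q_k(re^{i\ta})|\le r^{n_{k-1}}M_k,\qquad M_k:=\max_\ta\Big|\sum_{n_{k-1}<j\le n_k}c_j e^{ij\ta}\Big|.
\eeq
I would then record a summation lemma: if $n_K\le 1/(1-r)<n_{K+1}$, the doubling of $g$ forces geometric growth $A\,g(n_k)\le g(n_{k+1})\le DA\,g(n_k)$, while $r^{n_{k-1}}$ decays doubly exponentially once $k\ge K+2$ (since $n_{k-1}\gs 2^{k-K}/(1-r)$). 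Splitting $\sum_k g(n_k)r^{n_{k-1}}$ at $k=K$ and summing the two tails (geometric below, doubly-exponentially small above) yields $\sum_k g(n_k)r^{n_{k-1}}\ls g(1/(1-r))=v(r)$. Thus each part reduces to establishing $M_k\ls g(n_k)$, after which $|u(re^{i\ta})|\le\sum_k r^{n_{k-1}}M_k\ls v(r)$ gives $u\in\hv$.

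For (i) the estimate is purely deterministic: $M_k\le\sum_{n_{k-1}<j\le n_k}|c_j|=\sum_{n_{k-1}<j\le n_k}|\fa_j|\le n_k\cdot g(n_k)/n_k=g(n_k)$ for every choice of signs $\x_{ji}\in\{-1,1\}$, so $u\in\hv$ unconditionally. For (ii) I would invoke the Salem–Zygmund estimate in Kahane's subnormal form \cite{Ka}: applied to the real and imaginary parts of $\sum c_j e^{ij\ta}$, whose coefficient $\ell^2$-norms are at most $(\sum_{n_{k-1}<j\le n_k}|\fa_j|^2)^{1/2}\le(n_k\cdot g(n_k)^2/(n_k\log n_k))^{1/2}=g(n_k)/\sqrt{\log n_k}$, it gives
\beq
\mathbb P\big(M_k> C g(n_k)\big)\ls n_k^{-2},
\eeq
the gain being that the generic $\sqrt{\log n_k}$ factor is exactly absorbed by $1/\sqrt{\log n_k}$. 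Since $n_k\ge 2^k$ these probabilities are summable, so Borel–Cantelli gives $M_k\le Cg(n_k)$ for all large $k$ almost surely; the finitely many exceptional blocks form a harmonic polynomial bounded on $\overline{\D}$ and are harmless. This yields $u\in\hv$ almost surely.

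Part (iii) is where the real work lies: with $a_j=g(n_k)/\sqrt{n_k}$ we have $\sum_{n_{k-1}<j\le n_k}a_j^2=g(n_k)^2$, the maximal size permitted by \eqref{square sum one}, and random signs would incur an unavoidable $\sqrt{\log n_k}$ loss. The crucial observation is that \emph{within each block the coefficients are constant}, so I would choose the signs $\x_j$ on $n_{k-1}<j\le n_k$ to be those of a Rudin–Shapiro polynomial, shifted by the frequency $n_{k-1}$ (which leaves the modulus unchanged). This yields $M_k=\frac{g(n_k)}{\sqrt{n_k}}\max_\ta\big|\sum \pm e^{ij\ta}\big|\le \frac{g(n_k)}{\sqrt{n_k}}\cdot C\sqrt{n_k-n_{k-1}}\le Cg(n_k)$, the required bound for this explicit deterministic sign sequence, and feeding $M_k\ls g(n_k)$ into the reduction finishes the proof. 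I expect the main obstacle to be precisely this step: recognizing that the block-constant structure of the coefficients is what allows a flat-polynomial (Rudin–Shapiro) choice of signs to beat the generic Salem–Zygmund bound, and checking that the frequency-shift and maximum-modulus reductions preserve the flatness estimate.
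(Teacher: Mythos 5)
Your proof is correct, and its essential inputs are the same as the paper's: part (i) is the triangle inequality, i.e.\ the sufficient condition \eqref{absolute values}; part (ii) rests on the Salem--Zygmund/Kahane estimate (Theorem C) applied blockwise between consecutive $n_k$'s together with Borel--Cantelli; and part (iii) uses frequency-shifted Rudin--Shapiro polynomials --- precisely the three devices the paper invokes by citing \eqref{absolute values}, Corollary \ref{induction} and the construction written out in the proof of Proposition \ref{liminf}. Where you genuinely differ is in the reduction. The paper (via Theorem \ref{improved d}) first establishes the partial-sum bound $M_L(\x)\le B_\x+Cg(L)$ for every degree $L$ and then passes to the disk by Abel summation and the estimate $(1-r)\sum_k g(k)r^k\lesssim g(1/(1-r))$; you instead bound each whole block at radius $r$ by $r^{n_{k-1}}M_k$ using the factorization $z^{n_{k-1}}Q(z)$ and the maximum modulus principle, and then sum $\sum_k g(n_k)r^{n_{k-1}}\lesssim v(r)$ directly. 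Your route is self-contained, treats all three parts uniformly through the single target $M_k\lesssim g(n_k)$, and needs Theorem C only at the block endpoints $N=n_k$ rather than at every $N$; the paper's partial-sum formulation is the one that survives when the per-block bound is replaced by the accumulated hypothesis of Theorem \ref{improved d}, and when one works with spherical harmonics on $\B$, where the analytic factorization and maximum modulus argument are unavailable. Two minor points to tidy: the tail estimate should read $n_{k-1}>2^{\,k-K-2}/(1-r)$ for $k\ge K+2$ (the conclusion is unaffected), and in (iii) the normalization $g(n_k)/\sqrt{n_k}$ versus the Rudin--Shapiro bound $5\sqrt{n_k-n_{k-1}}$ costs only a factor $\sqrt{2}$ because $n_k\ge 2n_{k-1}$, which is worth stating when you conclude $M_k\le Cg(n_k)$.
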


\begin {proof}
Point (i) follows from \eqref {absolute values}, and (ii) will follow from Corollary \ref {induction}. 
The function in (iii) is constructed as in the proof of Theorem 1.12 (b) in \cite {BST};  we will write this function in the proof of Proposition \ref{liminf}. 
\end {proof}
In Proposition \ref {constant} we will see that (ii) is sharp.

\subsection {Preliminaries on higher-dimensional functions}
 We consider real-valued functions of $d+1$ real variables, $d\ge 1$. Let $F_n$ be the space of restrictions of polynomials on $\R^{d+1}$ of degree less than or equal to $n$ to the unit sphere $S^{d}$. Then 
the following Bernstein inequality 
\beq
\label {bs}
| | \nabla  P ||_\infinity \le n | |P | |_\infinity
\eeq
 holds for all $n $  and all $P\in F_n $, where 
 the gradient is evaluated tangentially to the sphere, see for example \cite [Theorem V]{oKe}. 
For trigonometric polynomials this is a well-known inequality by Bernstein. 

The next lemma will be used to prove our main result.

\begin {lem}
\label{several dimensions}
 Let $P_n\in F_n$, $M_n=\max_{S^{d}}|P_n|$ and  $\alpha\in(0,1)$. 
Then there exists a spherical cap of measure 
$C((1-\alpha)/n)^d $    in which $|P_n |\ge \alpha M_n $, and $C $ depends  on $d $. 
\end {lem}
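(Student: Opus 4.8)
The plan is to locate a point where $|P_n|$ attains its maximum $M_n$ and to use the Bernstein inequality \eqref{bs} to control how fast $P_n$ can decay as one moves away from that point along the sphere. First I would observe that, since $S^{d}$ is compact and $P_n$ is continuous, the maximum $M_n=\max_{S^{d}}|P_n|$ is attained at some point $x_0\in S^{d}$; replacing $P_n$ by $-P_n$ if necessary (which alters neither $F_n$, nor $M_n$, nor $|P_n|$) I may assume $P_n(x_0)=M_n$. If $M_n=0$ then $P_n\equiv 0$ and the statement is vacuous, so I assume $M_n>0$.

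Next I would estimate $P_n$ near $x_0$ via the gradient bound. For a point $x\in S^{d}$, let $\gamma:[0,L]\to S^{d}$ be the minimizing geodesic (great-circle arc) from $x_0$ to $x$, parametrized by arc length, so that $L=\mathrm{dist}(x_0,x)$ is the geodesic distance and $|\gamma'(t)|=1$. Since $\gamma'(t)$ is a unit tangent vector to the sphere and $\nabla P_n$ is the tangential gradient, the chain rule gives $\frac{d}{dt}P_n(\gamma(t))=\langle \nabla P_n(\gamma(t)),\gamma'(t)\rangle$, and hence $\left|\frac{d}{dt}P_n(\gamma(t))\right|\le \|\nabla P_n\|_\infty\le n M_n$ by \eqref{bs}. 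Integrating from $0$ to $L$ yields $P_n(x)\ge P_n(x_0)-nM_n L = M_n\bigl(1-n\,\mathrm{dist}(x_0,x)\bigr)$.

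Then I would choose the cap. Set $\delta=(1-\alpha)/n$ and let $\Omega=\{x\in S^{d}:\mathrm{dist}(x_0,x)\le\delta\}$ be the geodesic cap of radius $\delta$ about $x_0$. For $x\in\Omega$ the previous estimate gives $P_n(x)\ge M_n(1-n\delta)=\alpha M_n$, so in particular $|P_n|\ge\alpha M_n$ throughout $\Omega$, as required. It then remains to bound the measure of $\Omega$ from below. Because $\alpha\in(0,1)$ and $n\ge 1$ we have $\delta=(1-\alpha)/n<1<\pi/2$, so $\Omega$ is a genuine cap lying in the regime where the surface measure is comparable to $\delta^d$; concretely, the cap measure equals $\omega_{d-1}\int_0^\delta(\sin t)^{d-1}\,dt$, where $\omega_{d-1}=|S^{d-1}|$, and using $\sin t\ge(2/\pi)t$ on $[0,\pi/2]$ one obtains a lower bound $C\delta^d=C((1-\alpha)/n)^d$ with $C$ depending only on $d$.

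The argument is essentially elementary once assembled; the two points requiring a little care are the reduction to a tangential directional derivative along $\gamma$ — which is precisely what makes the tangential Bernstein inequality \eqref{bs} applicable — and the geometric lower bound for the measure of a cap, where one must verify that the radius $\delta$ stays below $\pi/2$ for every admissible $\alpha$ and $n$, so that the comparison $\sin t\gtrsim t$ (and thus measure $\gtrsim\delta^d$) remains valid.
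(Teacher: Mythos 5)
Your proof is correct and follows essentially the same route as the paper: both arguments use the tangential Bernstein inequality \eqref{bs} to show that $P_n$ cannot drop from $M_n$ to $\alpha M_n$ within geodesic distance $(1-\alpha)/n$ of the maximum point, and then invoke the lower bound $C\phi^{d}$ for the measure of a spherical cap of radius $\phi$. The only cosmetic difference is that the paper phrases the gradient step as a Lipschitz estimate between the maximum point and the nearest point where $|P_n|=\alpha M_n$, whereas you integrate along the minimizing geodesic directly; the content is identical.
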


\begin {proof}
 Let $\delta (y,\z) $ be the geodesic distance between two points $y $ and $\z $ on $S^{d}$. Then let $B (y,\p)= \{\z\in S^{d}: 
 \delta(y,\z) < \p\} $ be the spherical cap of radius $\p $ with center in $y $. It can be shown that for the $d$-dimensional surface measure of the cap 
 \beq
 \label{cap}
    | B (y,\p)| \ge C \p^{d},
  \eeq 
  where the constant depends on $d $.
  
  Let $y_0 $ be a point at which $|P_n |=M_n$, and let $y_1 $ be the closest point where $|P_n | =\alpha M_n $; there is nothing to prove if such a point does not exist. Just as in the proof of Lemma 4.2.3 in \cite{SZ}, we have
$$M_n(1 -\alpha) = |P_n(y_0)|-|P_n(y_1)|\le |P_n(y_0)-P_n(y_1)|\le \delta(y_0,y_1)\max |\nabla P_n|$$
and by \eqref{bs}, $ \delta(y_0,y_1)\ge (1-\alpha)/n $. 
Therefore, by \eqref{cap}, there exists a spherical cap of measure at least $C((1-\alpha)/n)^d $   in which $|P_n |\ge \alpha M_n $.
\end {proof}

The next result is Theorem 1 in \cite [p. 68] {Ka}, which we will need to prove our main result.
\begin {factc}
Let $E $ be a measure space with a positive measure $\mu $, and $\mu (E) <\infinity $. Let $F $ be a linear space of measurable bounded functions on $E $, closed under complex conjugation, and suppose there exists $\rho>0 $ with the following property: if $f\in F$ and $f $ is real, there exists a measurable set $I =I (f)\subset E $ such that $\mu (I) \ge \mu (E)/\rho $ and $|f (t) |\ge \fraction12 \|f\|_\infinity $ for $t\in I $. Let us consider a random finite sum
$$P =\sum \x_j f_j$$
where $\x_j $ is a subnormal sequence and $f_j\in F $. Then, for all $\kappa>2 $,
$$\pr (\|P\|_\infinity \ge 3 (\sum \| f_j\|^2_\infinity \log (2\rho\kappa))^{1/2})\le \fraction {2} {\kappa}.  $$ 
\end {factc}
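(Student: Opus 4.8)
The plan is to run the classical Salem--Zygmund argument in the form used by Kahane: bound the exponential moments of $P$ pointwise using subnormality, integrate over $E$, and then invoke the geometric hypothesis on the set $I(f)$ to turn the resulting $L^1$-estimate back into a bound on $\|P\|_\infinity$. Throughout I write $S=\sum_j\|f_j\|^2_\infinity$, and I note $\sum_j f_j(t)^2\le S$ for every $t$.

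First I would fix $t\in E$ and treat $P(t)=\sum_j\x_j f_j(t)$ as a real random variable. Independence together with the defining inequality $\E(e^{\lambda\x_j})\le e^{\lambda^2/2}$, applied with $\lambda=\pm\mu f_j(t)$ (any real argument is allowed), gives
\[
\E\bigl(e^{\pm\mu P(t)}\bigr)=\prod_j\E\bigl(e^{\pm\mu f_j(t)\x_j}\bigr)\le e^{\mu^2\sum_j f_j(t)^2/2}\le e^{\mu^2 S/2},
\]
and hence $\E(\cosh\mu P(t))\le e^{\mu^2 S/2}$ for every $\mu$ and every $t$. Integrating in $t$ and using Tonelli on the nonnegative integrand, $\E\!\int_E\cosh\mu P\,d\mu_E\le\mu(E)e^{\mu^2 S/2}$, so by Markov's inequality $\pr\bigl(\int_E\cosh\mu P\,d\mu_E\ge a\bigr)\le\mu(E)e^{\mu^2 S/2}/a$ for every $a>0$.

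The deterministic half is where the hypothesis enters. For any realization of the signs, $P=\sum_j\x_j f_j$ is a fixed real function in $F$, so there is a set $I=I(P)$ with $\mu(I)\ge\mu(E)/\rho$ on which $|P|\ge\tfrac12\|P\|_\infinity$; since $\cosh$ is even and $\cosh x\ge\tfrac12 e^{x}$, this yields $\int_E\cosh\mu P\,d\mu_E\ge\tfrac{\mu(E)}{2\rho}e^{\mu\|P\|_\infinity/2}$ unconditionally. Taking $a=\tfrac{\mu(E)}{2\rho}e^{\mu T/2}$ and combining the two halves gives $\pr(\|P\|_\infinity\ge T)\le 2\rho\,e^{\mu^2 S/2-\mu T/2}$ for every $\mu>0$. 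Optimizing with $\mu=T/(2S)$ produces $\pr(\|P\|_\infinity\ge T)\le 2\rho\,e^{-T^2/(8S)}$, and substituting $T=3(S\log(2\rho\kappa))^{1/2}$ makes the exponent $-\tfrac98\log(2\rho\kappa)$, so that $\pr(\|P\|_\infinity\ge T)\le 2\rho(2\rho\kappa)^{-9/8}=\kappa^{-1}(2\rho\kappa)^{-1/8}\le 1/\kappa$, using $2\rho\kappa\ge1$. The constant $3$ in place of the optimal $2\sqrt2$ and the factor $2$ inside the logarithm are precisely the slack built into the statement.

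I expect the only genuine obstacle to be the complex-valued case, since $F$ is merely assumed closed under complex conjugation while the hypothesis on $I(f)$ is stated for real $f$. The remedy is to apply the estimate to $\mathrm{Re}(e^{i\theta}P)\in F$, which equals $|P|$ at the extremal point for the appropriate $\theta$, and to control the dependence of that $\theta$ on the random outcome by averaging (or by a finite net) over the circle; this is what the extra factor $2$ above is there to absorb. In the setting actually needed in this paper all $f_j$ are real, so this complication does not arise and the argument above is complete; the only point then requiring care is keeping the constants aligned through the optimization so that the stated form survives.
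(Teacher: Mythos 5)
First, a point of reference: the paper offers no proof of Theorem C at all --- it is quoted verbatim from Kahane (Theorem 1, p.\ 68) --- so the comparison here is with the standard literature proof rather than with anything in this paper. For real-valued $f_j$ your argument is exactly that standard Salem--Zygmund/Kahane proof: subnormality and independence give $\E\int_E\cosh(\lambda P)\,d\mu\le\mu(E)e^{\lambda^2S/2}$, the hypothesis on $I(P)$ (applied to the fixed real function $P\in F$ obtained from each realization) gives the deterministic lower bound $\int_E\cosh(\lambda P)\,d\mu\ge\frac{\mu(E)}{2\rho}e^{\lambda\|P\|_\infty/2}$, and Markov plus optimization in $\lambda$ yields $\pr(\|P\|_\infty\ge T)\le2\rho e^{-T^2/(8S)}$. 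Your constants check out: with $T=3(S\log(2\rho\kappa))^{1/2}$ the bound is $(2\rho\kappa)^{-1/8}/\kappa\le1/\kappa$ (note $\rho\ge1$ is forced by $I\subset E$ whenever the hypothesis is nonvacuous), which is even stronger than the stated $2/\kappa$. This part is complete and correct.

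The one genuine gap is the complex case, and contrary to your closing remark it is not avoidable in this paper: Theorem 6.5 applies Theorem C with $F$ equal to the set of \emph{complex} trigonometric polynomials, which is precisely why the statement assumes $F$ closed under complex conjugation while imposing the $I(f)$ hypothesis only on real $f$. Note also that your first-half estimate silently assumes $f_j(t)$ real, since $\E(e^{\lambda\xi})\le e^{\lambda^2/2}$ is only available for real $\lambda$. The repair you sketch --- applying the real case to $\Re(e^{i\theta}P)$ --- runs into the difficulty you half-acknowledge: the extremal $\theta$ depends on the random outcome, so the exponential-moment bound cannot be applied to that single random rotation. The standard route is to split $P=P'+iP''$ with $P',P''\in F$ real (this is where closure under conjugation is actually used) and take a union bound, which is exactly where the $2/\kappa$ in the statement comes from; but done naively this degrades the constant from $2\sqrt2$ to $4$ via $\|P\|_\infty\le(\|P'\|_\infty^2+\|P''\|_\infty^2)^{1/2}$, overshooting the stated $3$. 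So the complex case needs either a more careful constant bookkeeping (e.g., a finite net of rotations with the extra union-bound factor absorbed into the logarithm) or an honest weakening of the constant; as written, your proof establishes the theorem only for real $F$, which suffices for Theorem 3.1 but not, as stated, for the application in Section 6.
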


\section{Main results}
\label {main}

\subsection{Sufficient conditions on the coefficients}
We consider harmonic functions defined by \eqref{un}, where $Y_{ml}$ are spherical harmonics of degree $m $ 
on the sphere $S^d $, and
we use the notation $\fa_m = (a_{m0},...,a_{mL_m}) $, so $|\fa_m |^2 =\sum_{l=0}^{L_m} |a_{ml}|^2 $.
We are now ready to prove the following:
\begin {thrm}
\label {improved d}
Let  $\x=\{\x_{ml}\}$  be a subnormal sequence. If there exists an increasing sequence $\{n_k\} $ of positive  integers such that  for all $k $ we have
$g (n_{k+1})\le C_1 g (n_k) $  and
$$\sum_{j=1}^k \sqrt{\left(\sum_{m=n_{j-1}+1}^{n_j}|\fa_m |^2 
\right)
\log n_{j}}\le C_2 g (n_{k}), $$
then $u(x,\x) =\sum_{m=0}^\infty \sum_{l=0}^{L_m} a_{ml} \x_{ml} r^{m} Y_{ml} \left(\fraction {x} {r}\right)\in\hvb $ almost surely.
\end {thrm}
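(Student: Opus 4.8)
The plan is to cut the series into the blocks determined by $\{n_k\}$, to bound the supremum of each block on a sphere by means of Theorem C, and to reassemble the blocks after matching the radius of evaluation to the block index. Write $B_j=\sum_{m=n_{j-1}+1}^{n_j}|\fa_m|^2$ and set
\[ P_j(x,\x)=\sum_{m=n_{j-1}+1}^{n_j}\sum_{l=0}^{L_m}a_{ml}\x_{ml}\,r^{m}Y_{ml}(x/r),\qquad r=|x|, \]
so that $u=\sum_{j\ge1}P_j$ and each $P_j$ is a harmonic polynomial whose restriction to $S^{d}$ lies in $F_{n_j}$. Applying Lemma \ref{several dimensions} with $\alpha=1/2$ shows that every real $P\in F_{n_j}$ satisfies $|P|\ge\tfrac12\|P\|_\infty$ on a cap of measure $\gs n_j^{-d}$; hence Theorem C applies with the space $F_{n_j}$ and $\rho_j\asymp n_j^{d}$.

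First I would run Theorem C on the boundary sphere. Taking $f_{ml}(\omega)=a_{ml}Y_{ml}(\omega)$ we get $\sum_{m,l}\|f_{ml}\|_\infty^2\le B_j$, and with $\kappa_j=n_j^{2}$ (so that $\log(2\rho_j\kappa_j)\ls\log n_j$ and $\sum_j2/\kappa_j<\infty$) the Borel--Cantelli lemma gives, almost surely and for all but finitely many $j$,
\[ \Big\|\,\textstyle\sum_{m,l}a_{ml}\x_{ml}Y_{ml}\Big\|_{\infty,S^{d}}\ls\sqrt{B_j\log n_j}. \]
The decisive step is a second application at the radii $\sigma_i=1-1/n_i$, where $v(\sigma_i)=g(n_i)$. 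For a pair $j>i$ I would apply Theorem C to $P_j(\sigma_i\,\cdot\,)=\sum_{m,l}\x_{ml}f_{ml}$ with $f_{ml}(\omega)=a_{ml}\sigma_i^{m}Y_{ml}(\omega)\in F_{n_j}$; since $\|Y_{ml}\|_\infty\le1$ and $m>n_{j-1}$,
\[ \sum_{m,l}\|f_{ml}\|_\infty^2\le\sum_{m=n_{j-1}+1}^{n_j}\sigma_i^{2m}|\fa_m|^2\le\sigma_i^{2n_{j-1}}B_j. \]
Choosing $\kappa_{ij}=(n_in_j)^2$ keeps $\log(2\rho_j\kappa_{ij})\ls\log n_j$ (as $n_i<n_j$) while $\sum_{i<j}2/\kappa_{ij}<\infty$, so Borel--Cantelli yields, almost surely and for all but finitely many pairs, $\max_{|x|=\sigma_i}|P_j(x)|\ls\sigma_i^{n_{j-1}}\sqrt{B_j\log n_j}$. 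Finitely many exceptional blocks and pairs contribute an almost surely finite amount, absorbed into the final constant.

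Now I would sum the block estimates on each sphere $|x|=\sigma_i$. For the low blocks $j\le i$ the maximum principle gives $\max_{|x|=\sigma_i}|P_j|\le\|\sum_{m,l}a_{ml}\x_{ml}Y_{ml}\|_{\infty,S^{d}}\ls\sqrt{B_j\log n_j}$, and the summation hypothesis bounds $\sum_{j\le i}\sqrt{B_j\log n_j}\le C_2g(n_i)$. For the high blocks $j>i$ the normalization $n_k\ge2n_{k-1}$ yields $n_{j-1}\ge2^{\,j-1-i}n_i$, so $\sigma_i^{n_{j-1}}\le e^{-n_{j-1}/n_i}\le e^{-2^{\,j-1-i}}$, while the hypothesis $g(n_{k+1})\le C_1g(n_k)$ iterates to $\sqrt{B_j\log n_j}\le C_2g(n_j)\le C_2C_1^{\,j-i}g(n_i)$. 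Hence
\[ \sum_{j>i}\sigma_i^{n_{j-1}}\sqrt{B_j\log n_j}\ls g(n_i)\sum_{p\ge1}C_1^{\,p}e^{-2^{\,p-1}}\ls g(n_i), \]
the series converging because double-exponential decay defeats geometric growth. Combining the two parts gives $\max_{|x|=\sigma_i}|u(x)|\ls g(n_i)=v(\sigma_i)$ almost surely for every $i$.

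Finally I would pass from the discrete radii to all of $\B$. The summability just established shows that $\sum_jP_j$ converges uniformly on each $\{|x|\le\sigma_i\}$, so $u$ is harmonic and $r\mapsto\max_{|x|=r}|u|$ is nondecreasing. For $\sigma_i\le r<\sigma_{i+1}$ the doubling condition gives $\max_{|x|=r}|u|\le\max_{|x|=\sigma_{i+1}}|u|\ls g(n_{i+1})\le C_1g(n_i)\le C_1v(r)$, while on $\{|x|<\sigma_1\}$ the function is bounded; thus $u\in\hvb$ almost surely. I expect the only genuine obstacle to be the radial bookkeeping for the high-frequency blocks: the estimate works precisely because Theorem C is evaluated at the matched radius $\sigma_i$, so that the damping factor $\sigma_i^{n_{j-1}}$ enters the bound with no spurious logarithmic loss, after which the super-geometric growth of $\{n_k\}$ renders the tail summable.
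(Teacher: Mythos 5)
Your proposal takes a genuinely different route from the paper. The paper never evaluates Theorem~C at interior radii: it bounds $\max_{S^d}|S_N|$ for \emph{every} partial sum $S_N$ on the boundary sphere (applying Theorem~C to the increments $S_N-S_{n_{j-1}}$ for all $N$, not just at block endpoints), obtains $M_L\le B_\x+Cg(L)$, and then transfers this into the ball by a single Abel summation against $r^m$, using the doubling of $g$ via the estimates \eqref{to n}--\eqref{from n}. You instead estimate whole blocks directly on the matched spheres $|x|=\sigma_i=1-1/n_i$ and let the damping $\sigma_i^{n_{j-1}}$ handle the high-frequency tail. Most of your steps are correct: the two applications of Theorem~C with $\rho_j\asymp n_j^{d}$ coming from Lemma~\ref{several dimensions}, the choices of $\kappa$ making the exceptional probabilities summable over blocks and over pairs, the maximum principle for the low blocks, and the passage from the discrete radii to all of $\B$.

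There is, however, a genuine gap in the high-block tail. You invoke ``the normalization $n_k\ge 2n_{k-1}$,'' but Theorem~\ref{improved d} assumes only that $\{n_k\}$ is increasing; the doubling of $\{n_k\}$ is imposed in the paper only for sequences constructed via \eqref{n}, not in the hypotheses of the theorem. Without lacunarity your chain $\sigma_i^{n_{j-1}}\sqrt{B_j\log n_j}\le e^{-2^{j-1-i}}C_2C_1^{j-i}g(n_i)$ fails: take $n_k=k$ (admissible, with the hypothesis then reading $\sum_{j\le k}|\fa_j|\sqrt{\log j}\le C_2g(k)$); then $\sigma_i^{n_{j-1}}=(1-1/i)^{j-1}$ decays only geometrically while $C_1^{j-i}$ grows geometrically, and $\sum_{j>i}(1-1/i)^{j-1}C_1^{j-i}$ diverges once $C_1>(1-1/i)^{-1}$. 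Nor can you simply pass to a doubling subsequence, since merging blocks replaces each $\log n_j$ by the logarithm of a possibly much larger right endpoint and the summation hypothesis need not survive. The repair is to abandon the termwise bound and sum the tail by parts: the hypothesis controls the partial sums $T_k=\sum_{j=i+1}^{k}\sqrt{B_j\log n_j}\le C_2g(n_k)$, whence
\begin{equation*}
\sum_{j>i}\sigma_i^{n_{j-1}}\sqrt{B_j\log n_j}\;\le\;\sum_{k>i}\bigl(\sigma_i^{n_{k-1}}-\sigma_i^{n_k}\bigr)T_k+\lim_{K\to\infty}\sigma_i^{n_{K-1}}T_K\;\ls\;(1-\sigma_i)\sum_{t\ge n_i}g(t)\sigma_i^{t}\;\ls\;g(n_i),
\end{equation*}
the last two steps using $g(n_k)\le C_1g(n_{k-1})$, the monotonicity of $g$, and exactly the computation \eqref{from n} (which needs only the doubling of $g$). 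With that substitution your argument goes through.
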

 
In two dimensions $|\fa_m |^2 $ is just $|a_{m0}|^2+|a_{m1}|^2$, so the same assumptions imply that $u (re^{i\ta},\x) = \sum_{m=0}^\infty (a_{m0} \x_{m0} r^m \cos m\ta +a_{m1} \x_{m1} r^m \sin m\ta)\in\hv $ almost surely.

\begin {proof} 
Let $S_n (y,\x) = \sum_{m=0}^n \sum_{l=0}^{L_m} a_{ml} \x_{ml} Y_{ml} (y) $ where $y\in S^{d} $ and denote $M_n (\x)=\max_{ y\in S^{d}} |S_n (y,\x)| $. Let $j=j (N) $  be such that $n_{j-1} <N\le n_{j} $ and define $Q_N(y,\x)=S_{N} (y,\x)-S_{n_{j-1}} (y,\x)$ and $\M_N (\x)=\max_{ y\in S^{d}} |Q_N (y,\x)| $. 
 Since harmonic polynomials on the sphere fulfill \eqref{bs}, 
 by Lemma \ref{bs} there exists a spherical cap of measure 
$C(\fraction1 {2N})^d $    in which $|Q_N |\ge\fraction12 \M_N $, where $C $ depends  on $d $. 
Then we can apply Theorem C to $Q_N $ with $E =S^{d} $, $\mu $  the surface measure on $S^{d} $, $F$ the set of functions of the form $\sum_{m=0}^N \sum_{l=0}^{L_m} a_{ml} \x_{ml} Y_{ml} (y) $, 
$\kappa =2N^2$, and $\rho $  a constant which depends on $d $. 
Define 
 $$E_N =\left\{\x: \M_N(\x)\ge K_1  \sqrt{\sum_{m =n_{j -1} +1}^N |\fa_m|^2\log N}\right\}, $$ 
where $K_1 $ is a constant which is chosen large enough to make $3\sqrt {\log 2\rho \kappa} \le K_1 \sqrt{\log N}$.
 Then since $\sum_{N=1}^{\infinity} \mathcal {P} (E_N ) =\sum_{N=1}^{\infinity} 1/N^2 <\infinity $, we have by the Borel-Cantelli lemma (see for example \cite [p. 7] {Ka}) that 
 for almost all $\x $ there is a  $J =J(\x)$ such that
 $$\M_{N} (\x) \le K_1\sqrt{ \sum_{m =n_{j -1} +1}^N |\fa_m |^2 \log N}
  $$ 
 for $N\ge n_J $. 
   Fix $L $ and let $n_{k-1}< L\le n_{k}$. Then for $L> n_{J} $,
 \begin{align*}
 M_L(\x)&\le  M_{n_{J-1}}(\x) +\sum_{j=J}^{k -1}\M_{n_j}(\x)+\M_{L}(\x)\\
 &\le B_\x 
 +K_1\sum_{j=J}^{k }\sqrt{ \sum_{m =n_{j -1} +1}^{n_j} |\fa_m |^2 \log n_j} 
 \le B_\x +K_1 C_2 g (n_{k})\\ 
 &\le B_\x + C_3  g (n_{k-1})\le B_\x +C_3  g (L)
 \qquad \mathrm{for\, a.e.\, }\x.
 \end{align*}
 Let $B_\x $ be large enough to make the inequality  $M_L(\x)\le B_\x +C_3  g (L)$ hold also for $0<L\le n_{J} $, and also let $M_0(\x)\le B_\x $.
 Let $r = | x | $ and $y=x/|x | $. By summation by parts,
 \begin{gather*}
 \left|\sum_{m=0}^n \sum_{l=0}^{L_m} a_{ml} \x_{ml} r^m Y_{ml}  \left(\fraction {x} {r}\right) 
 \right| =\left|r^n S_n(y,\x) -  (1 -r)\sum_{k=0}^{n-1} S_k (y,\x)r^k\right|\\
 \le  r^n(C_3 g (n) +B_\x) +  (1 -r)\left(B_\x +\sum_{k=1}^{n-1}  (C_3g (k)+B_\x)r^k\right).
 \end {gather*}
Then because of the doubling condition 
we get
\beq\label {to infinity}
\left| \sum_{m=0}^\infinity \sum_{l=0}^{L_m} a_{ml} \x_{ml} r^m Y_{ml}  \left(\fraction {x} {r}\right)\right| 
\le  C_3 (1 -r)\sum_{k=1}^{\infinity} g (k)r^k+B_\x\qquad \mathrm{for\, a.e.\, }\x. 
\eeq
 Pick $N $ such that $1-\fraction1 {N-1}<r\le 1-\fraction1 {N}$. Then 
\beq
\label {to n}
(1 -r)\sum_{k=1}^{N} g (k)r^k\le (1 -r)g (N)\sum_{k=1}^{N} r^k\le  g (N) 
\eeq
and

\begin {equation}
\label{from n}
\begin {split}
(1 -r)\sum_{k=N+1}^{\infinity} g (k)r^k
 &= (1 -r)\sum_{j=0}^{\infinity}r^{2^{j}N}\sum_{i=1}^{2^jN} g (2^jN +i)r^i\\
&\le (1 -r)\sum_{j=0}^{\infinity}g (2^{j+1}N )r^{2^{j}N}\sum_{i=1}^{2^jN} r^i\\
&\le  g (N)\sum_{j=0}^{\infinity}	D^{j+1}\left[\left(1 -\fraction1N\right)^N\right]^{2^{j}}\le C_4 g (N).
\end {split}
\end {equation}
Here $C_4$ depends on $D$ only. Then by \eqref{to infinity}, \eqref{to n} and \eqref{from n}, $u \in\hvb $ almost surely.
 
\end {proof}

\begin {remark}
\label {the improvement}
If we had applied Theorem C to $S_n $ instead of $Q_n $ we could have obtained
\beqs
\max_{y\in S}
|S_n (y,\x)|\le C \sqrt{\sum_{m=0}^{n}|\fa_m |^2 \log n}+C_\x 
\qquad \mathrm{for\, a.e.\, }\x.
\eeqs
Then if
\beq
\label {old version}
\left(\sum_{m=0}^{n}|\fa_m |^2 
\right)^{1/2} \le C\fraction {g (n)} {\sqrt{\log n}}, 
\eeq
we would get by partial summation as above that $u\in\hvb $ almost surely, and this generalizes Theorem B.
But the approach in Theorem \ref {improved d} is better for two reasons.  First of all it makes sense even if $g $ grows more slowly than $\sqrt{\log n} $.
For some examples it also gives a better estimate, in the sense that when the coefficients are given and we want to estimate the correct order of growth of a function, Theorem~\ref {improved d} may give a more slowly growing estimate for $g $ than we get by using \eqref {old version}: 
Let $n_k =2^{2^k} $ for $k =0,1,...$ and define $ a_0 =
a_1 =a_2=0 $ and
$$a_j=\fraction {1}{\sqrt{n_k}},\qquad n_{k-1} <j\le n_k.$$
For 
$u (z,\x) = \sum_{j=0}^\infty a_j \x_j r^j \cos j\ta $
\eqref{old version}
gives $g (x) = (\log x\log\log x)^{1/2}$ since 
$$\sum_{j=0}^{n_N} a_j^2 =\sum_{k=0}^N \fraction  {n_k -n_{k -1}}{n_k} \simeq N+ 1\simeq  \log\log n_N,$$ but Theorem \ref {improved d} gives $g (x) = (\log x)^{1/2}$  
since 
\beqs
\sum_{k=1}^N \sqrt{ \left(\sum_{j =n_{k -1}+1}^{n_k} a_j^2\right)\log n_{k}}
 \simeq C\sqrt{\log n_N }.
 \eeqs
We will see in Proposition \ref{constant} 
 that $g (x) = (\log x)^{1/2}$ is the optimal 
 estimate for this function. 
 \end {remark}

\begin {cor}
\label {induction}
Let $\x=\{\x_{ml}\}$  be a subnormal sequence and define $\{n_k\} $ as in \eqref{n}.
If
$$\left(\sum_{m=n_{k-1}+1}^{n_{k}} \fa_m^2\right)^{1/2} \le C\fraction {g (n_{k})} {\sqrt{\log n_{k}}}, $$
then $u(x,\x) =\sum_{m=0}^\infty \sum_{l=0}^{L_m} a_{ml} \x_{ml} r^{m} Y_{ml} \left(\fraction {x} {r}\right)\in\hvb $ almost surely. 
\end {cor}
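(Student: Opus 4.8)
The plan is to deduce the corollary directly from Theorem~\ref{improved d} by verifying its two hypotheses for the particular sequence $\{n_k\}$ defined in \eqref{n}. The coefficient assumption and the conclusion of the corollary match those of the theorem verbatim, so the only work is to check (a) that $g(n_{k+1})\le C_1 g(n_k)$ for some $C_1$ and all $k$, and (b) the summability estimate
$$\sum_{j=1}^k \sqrt{\Big(\sum_{m=n_{j-1}+1}^{n_j}|\fa_m|^2\Big)\log n_j}\le C_2 g(n_k).$$

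For (a) I would use the minimality built into \eqref{n} together with the doubling condition. By definition $g(n_{k+1})\ge A g(n_k)$, which also yields the geometric spacing $g(n_{j-1})\le A^{-1}g(n_j)$ needed below. For the upper bound, minimality of $n_{k+1}$ gives $g(n_{k+1}-1)<A g(n_k)$; since $n_{k+1}\ge 2$ and $g$ is increasing, the doubling condition \eqref{gdouble} gives $g(n_{k+1})\le g(2(n_{k+1}-1))\le D g(n_{k+1}-1)<DA\,g(n_k)$, so $C_1=DA$ works.

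For (b) I would first bound each summand using the hypothesis: from $\big(\sum_{m=n_{j-1}+1}^{n_j}|\fa_m|^2\big)^{1/2}\le C g(n_j)/\sqrt{\log n_j}$ one gets
$$\sqrt{\Big(\sum_{m=n_{j-1}+1}^{n_j}|\fa_m|^2\Big)\log n_j}\le C g(n_j).$$
Summing and applying the geometric decay $g(n_j)\le A^{-(k-j)}g(n_k)$ from (a), the tail collapses into a convergent geometric series:
$$\sum_{j=1}^k \sqrt{\Big(\sum_{m=n_{j-1}+1}^{n_j}|\fa_m|^2\Big)\log n_j}\le C\sum_{j=1}^k g(n_j)\le C g(n_k)\sum_{i=0}^{\infty}A^{-i}=\frac{CA}{A-1}\,g(n_k),$$
so $C_2=CA/(A-1)$. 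With (a) and (b) established, Theorem~\ref{improved d} immediately gives $u\in\hvb$ almost surely.

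The argument is essentially routine, and the multiplicative spacing $g(n_j)\ge A g(n_{j-1})$ forced by \eqref{n} is exactly what makes the geometric-series estimate in (b) go through. The only point needing slight care is the upper estimate in (a): one cannot read $g(n_{k+1})\le C_1 g(n_k)$ off the defining inequality alone, but must combine the minimality of $n_{k+1}$ with the doubling condition, which is where the constant $D$ enters $C_1$.
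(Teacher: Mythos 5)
Your proposal is correct and follows essentially the same route as the paper: the paper likewise verifies the two hypotheses of Theorem~\ref{improved d}, obtaining $g(n_k)\le Dg(n_k/2)\le DA\,g(n_{k-1})$ from minimality plus the doubling condition, and then bounding the sum by $C_1 g(n_k)\sum_{j=1}^{k}A^{-(k-j)}\le C_2 g(n_k)$ exactly as you do. Your only cosmetic difference is using $n_{k+1}-1$ rather than $n_{k+1}/2$ as the intermediate point in the doubling step.
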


\begin {proof}
By the doubling condition  $g (n_{k}) \leq Dg (n_{k}/2)\leq DA g (n_{k-1}),  $
and since  
\begin{align*}
\sum_{j=1}^{k }\sqrt{\sum_{m=n_{j-1} +1}^{n_{j}} \fa_m^2\log n_{j}}
\le C_1g (n_{k})\sum_{j=1}^{k } \fraction {1} {A^{k -j}}\le C_2 g (n_{k}),
\end{align*}
the result follows from  Theorem \ref {improved d}.
\end {proof}

\begin {remark} 
Now it follows easily that Proposition \ref {three things} (ii) is true.
Functions with coefficients
$$ |\fa_j|\le\fraction {g(n_k)}{\sqrt{n_k\log n_k}},\qquad n_{k-1} <j\le n_k, $$
 are  in $\hv $  almost surely by Corollary \ref {induction}. 
\end {remark}

\begin {remark} 
\label {basis}
It is not necessary to assume that $\{Y_{ml}\} $ is a basis  in the proof of Theorem \ref {improved d}, we can use any combination of spherical harmonics.  We will need this fact when we apply our results to Bloch-type functions.
\end {remark} 


\section {Sharpness of results}
\label {sharpness section}
\subsection {Sharpness of Corollary \ref {induction}}
We  will now prove that Corollary \ref {induction}
is sharp by giving 
an example. We will first prove it in the two-dimensional case, and then indicate how it can be generalized to any dimension. 
The 
example is similar to the one given in the proof of
Theorem 1.18 (b) in \cite{BST}. 
We will use that
\beq
\label {essential}
\left\|\sum_{j=1}^n c_j  \cos ( N+4^j)\ta \right\|_\infinity\ge c
\sum_{j=1}^n |c_j|
\eeq
for any $N $ and some absolute constant $c>0 $. This can be shown by using Riesz products. Let $A$ be a constant such that
\beq
\label {condition on a}
\fraction {1} {A-1}\le \fraction {c}8
\eeq
where $c $ is the constant in \eqref {essential}. Let $n_0 =2 $, and for some $A $ that fulfills \eqref{condition on a} define $n_k $ by induction as in \eqref{n}.
We choose $A $ big enough to make $n_k\ge 4 n_{k-1}$ .

\begin {prop}
\label {sharpness}
Let $\{\nu_k\} $ be any sequence of positive numbers increasing to infinity 
and define $\{n_k\} $ as  in \eqref{n}.  
Then for the sequence
$\{a_j\} $ where
$$a_j=\nu_{k}\fraction { g(n_k)}{\log n_k},\qquad \mathrm{when}\; j= n_{k-1}+4^m, \quad 0\le m\le \log_4 \fraction {n_k}2, $$
and $a_j =0 $ otherwise, we have
\beqs
\left(\sum_{j=n_{k-1}+1}^{n_{k }} a_j^2\right)^{1/2} \le C\nu_{{k}}\fraction {g (n_{k})} {\sqrt{\log n_{k}}},
\eeqs
but  $u (z,\x) = \sum_{j=0}^\infty a_j\x_j r^j \cos j\ta
\notin \hv $ for any choice of sequence $\{\x_j\} $ where $\x_j =\pm 1 $.
\end {prop}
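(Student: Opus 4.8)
The plan is to treat the two claims separately: the displayed inequality on the block $\ell^2$-norm is a bookkeeping computation, while the failure $u\notin\hv$ is the real content and will be obtained from Theorem A applied to the Ces\`aro means.

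For the first claim, fix $k$ and count the nonzero coefficients with $n_{k-1}<j\le n_k$. They sit at the frequencies $j=n_{k-1}+4^m$, $0\le m\le\log_4(n_k/2)$, so there are $\lfloor\log_4(n_k/2)\rfloor+1\asymp(\log n_k)/\log 4$ of them, each equal to $\nu_k g(n_k)/\log n_k$. Hence $\sum_{n_{k-1}<j\le n_k}a_j^2\asymp\frac{\log n_k}{\log4}\cdot\frac{\nu_k^2g(n_k)^2}{(\log n_k)^2}$, and taking square roots yields the bound $\le C\nu_k g(n_k)/\sqrt{\log n_k}$ with $C=1/\sqrt{\log4}$. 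I note in passing that $4^m\le n_k/2$ and $n_{k-1}\le n_k/4$ force every such frequency into $(n_{k-1},3n_k/4]$, so the blocks occupy disjoint frequency ranges and $a_j$ is unambiguously defined.

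For the second claim I would avoid estimating $\max_\theta|u(re^{i\theta})|$ at a fixed radius, since there the higher-frequency blocks are only mildly damped and would overwhelm block $k$. Instead I would invoke Theorem A: if $u\in\hv$ then $\|\sigma_{n_k}u\|_\infty\le Cg(n_k)$ for all $k$. The Ces\`aro mean $\sigma_{n_k}$ keeps precisely the frequencies below $n_k$, so it annihilates blocks $k+1,k+2,\dots$ (which begin at frequency $n_k+1$) while retaining block $k$ in full (top frequency $\le 3n_k/4$) with weights $(1-j/n_k)\in[1/4,1)$. Thus on the circle $\sigma_{n_k}u(e^{i\theta})=\sum_{k'\le k}\tilde B_{k'}(\theta)$, a sum over the first $k$ blocks only, each carried by lacunary frequencies $n_{k'-1}+4^m$. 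Applying \eqref{essential} with $N=n_{k-1}$ to the block-$k$ part, with coefficients $c_m=(1-j_m/n_k)a_{j_m}\xi_{j_m}$, gives $\|\tilde B_k\|_\infty\ge c\sum_m|c_m|\gtrsim\frac{c}{4\log4}\,\nu_k g(n_k)$; crucially this lower bound is independent of the signs $\xi_j=\pm1$ because $|\xi_j|=1$.

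It then remains to show the lower blocks cannot cancel this at the maximizing angle $\theta^*$. At any $\theta$ one has $\sum_{k'<k}|\tilde B_{k'}(\theta)|\le\sum_{k'<k}\sum_m a_{j'_m}\le\frac1{\log4}\sum_{k'<k}\nu_{k'}g(n_{k'})$, and using $\nu_{k'}\le\nu_k$ together with the geometric decay $g(n_{k'})\le A^{-(k-k')}g(n_k)$ that follows from \eqref{n}, this is at most $\frac{\nu_k g(n_k)}{(A-1)\log4}\le\frac{c}{8\log4}\,\nu_k g(n_k)$ by \eqref{condition on a}. Hence for all large $k$ and every sign choice, $\|\sigma_{n_k}u\|_\infty\ge|\sigma_{n_k}u(e^{i\theta^*})|\ge\big(\tfrac{c}{4\log4}-\tfrac{c}{8\log4}\big)\nu_k g(n_k)=\frac{c}{8\log4}\,\nu_k g(n_k)$. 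Since $\nu_k\to\infty$, this contradicts $\|\sigma_{n_k}u\|_\infty\le Cg(n_k)$, so $u\notin\hv$ for every $\pm1$ sequence. The main obstacle is exactly this interference estimate: the argument works only because \eqref{essential} delivers a \emph{sign-independent} lower bound of the same order as the block's $\ell^1$-mass, and because $A$ is chosen through \eqref{condition on a} so that the geometric tail of the earlier blocks is strictly dominated; passing through the Ces\`aro means (equivalently \eqref{ces}) is what lets me discard the later blocks for free.
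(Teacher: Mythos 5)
Your proposal is correct and follows essentially the same route as the paper: lower-bound the Ces\`aro mean $\sigma_{n_N}u$ by isolating the top block, apply the Riesz-product inequality \eqref{essential} to get a sign-independent lower bound of order $\nu_N g(n_N)$, control the earlier blocks by the triangle inequality together with \eqref{condition on a}, and conclude via Theorem A. The only cosmetic difference is that you run the final cancellation estimate pointwise at the maximizing angle rather than as a reverse triangle inequality on sup-norms, which is the same computation.
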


\begin {proof}
 Inequality \eqref{condition on a} implies 
\beqs
\sum_{k=1}^{N -1} \nu_{k} g(n_k)\le \fraction c{8}\nu_{N}g(n_N).
\eeqs
 Let $\sigma_n$ be the Ces\`{a}ro mean, then by \eqref{essential} we have 
 for $n =n_N$,
\begin {gather*}
| |\sigma_n u | |_\infinity=\left\|\sum_{k =1}^{N}\nu_{k} \frac{g(n_k)}{\log n_k}\sum_{m = 0}^{\lfloor\log_4 (n_k/2)\rfloor} \left(1-\fraction {n_{k-1}+4^m}{n_N}\right)\x_{n_{k-1}+4^m} \cos ( n_{k-1}+4^m)\ta\right\|_\infinity\\
 \ge \nu_{N} \frac{g(n_N)}{\log n_N}\left\|\sum_{m = 0}^{\lfloor \log_4 (n_N/2)\rfloor} \left(1-\fraction {n_{N-1}+4^m}{n_N}\right) \x_{n_{N-1}+4^m} \cos ( n_{N-1}+4^m)\ta \right\|_\infinity \\
 - \left\|\sum_{k =1}^{N-1}\nu_{k} \frac{g(n_k)}{\log n_k}\sum_{m = 0}^{\lfloor\log_4 (n_k/2)\rfloor} \left(1-\fraction {n_{k-1}+4^m}{n_N}\right)\x_{n_{k-1}+4^m} \cos ( n_{k-1}+4^m)\ta\right\|_\infinity\\
\ge 
c\fraction1{4\log 4} \nu_{N} g(n_N) -\fraction1 {\log 4}\sum_{k =1}^{N-1}\nu_{k} g(n_k) \\
\ge \fraction1 {\log 4}\left(\fraction{c}4 -\fraction{c} {8}\right)  \nu_{N} g(n_N)= C  \nu_{N} g(n_N)
\end {gather*}
Hence by Theorem A we get that $u (z,\x) 
\notin \hv $.
\end {proof}

To prove the same in $\R^{d +1} $, let $Y_{j0}(y)=\Re (y_1+iy_2)^j=\cos j\ta$, where $y = (y_1,...,y_{d +1})$ and $\ta =\arctan \fraction {y_2} {y_1} $.  Also let $a_{j0}=a_j$, where $a_j $ is as above, and $a_{ji}=0$ otherwise. 
Then $u (x,\x)= \sum_{j=0}^\infty  a_{j0} \x_{j0} r^j Y_{j0} (\frac{x}{r}) \notin \hvb$. 

\subsection {Sharpness of Proposition \ref{three things} (ii)} 
The next example serves two purposes, one is to prove in another way that Corollary \ref {induction} is sharp, the other is to show that the estimate in Proposition \ref {three things} (ii) cannot be improved.
 
To construct this example
we need a result which is based on Lemma 4.5.1 in \cite {SZ}. This lemma is used in a similar way in
Theorem 3.7 in \cite{ACP} to prove a result on the coefficients of Bloch functions.
\begin {factal}
\label {sharp lemma}
Let
$\x=\{\x_k\}_{k=0}^\infty $  be a Rademacher sequence.  Let
$$H_n (\ta,\x) =\sum_{j=0}^{n}b_{j}\x_j  
\cos j\ta$$
 and
$$R_n =\sum_{j=0}^{n}b_{j}^2,\qquad T_n =\sum_{j=0}^{n}b_{j}^4 \le c\,\fraction  {R_n^2} {n}.$$
Then
\beqs
\max_{\ta} |H_n(\ta,\x)|>C\sqrt{R_n\log n_n}\qquad (C>0) 
\eeqs
except for $(\x_0,\x_1,...,\x_{n})\in E_n $ where $\pr(E_n)<B (c)\,n^{-1/10}$. The constant $C $ is absolute and $B $ depends on $c $.
\end {factal}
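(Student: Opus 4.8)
The plan is to follow the exponential–integral (second moment) method of Salem and Zygmund on which the statement is modelled. The point is that the maximum of $H_n$ dominates an exponential average: for every $\rho>0$,
\[
\max_\ta |H_n(\ta,\x)| \ge \max_\ta H_n(\ta,\x) \ge \frac1\rho\log\Bigl(\frac{I(\rho)}{2\pi}\Bigr),\qquad I(\rho):=\int_0^{2\pi} e^{\rho H_n(\ta,\x)}\,d\ta,
\]
since $I(\rho)\le 2\pi\, e^{\rho\max_\ta H_n}$. I would choose $\rho=\sqrt{s\log n/R_n}$ for a small absolute constant $s>0$, so that a lower bound of the form $I(\rho)\gtrsim \exp(\tfrac14\rho^2 R_n)$ converts into $\max_\ta H_n\gtrsim \rho R_n\simeq\sqrt{R_n\log n}$, exactly the desired conclusion. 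Thus everything reduces to two estimates: a lower bound on $\E[I(\rho)]$ and a concentration statement showing $I(\rho)$ rarely drops below half its mean.

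First I would bound $\E[I(\rho)]$ from below. As the $\x_j$ are Rademacher, independence gives $\E[e^{\rho H_n(\ta)}]=\prod_j\cosh(\rho b_j\cos j\ta)$, and the elementary inequality $\log\cosh x\ge \tfrac12 x^2-\tfrac1{12}x^4$ yields
\[
\E[e^{\rho H_n(\ta)}]\ge \exp\Bigl(\tfrac{\rho^2}{2}\sum_j b_j^2\cos^2 j\ta-\tfrac{\rho^4}{12}\sum_j b_j^4\cos^4 j\ta\Bigr).
\]
Integrating in $\ta$ and applying Jensen's inequality, together with $\frac1{2\pi}\int\cos^2 j\ta\,d\ta=\frac12$ and $\frac1{2\pi}\int\cos^4 j\ta\,d\ta=\frac38$, gives $\E[I(\rho)]\ge 2\pi\exp(\tfrac14\rho^2 R_n-c'\rho^4 T_n)$. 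Here the hypothesis $T_n\le cR_n^2/n$ is precisely what renders the error harmless: with the above $\rho$ one has $\rho^4 T_n\lesssim (\log n)^2/n\to 0$, so $\E[I(\rho)]\ge \pi\exp(\tfrac14\rho^2 R_n)$ for all large $n$.

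The crucial step is the concentration of $I(\rho)$, for which I would use the second moment method. Using $\cosh(a+b)=\cosh a\,\cosh b\,(1+\tanh a\,\tanh b)$ and independence,
\[
\E[I(\rho)^2]=\int_0^{2\pi}\!\!\int_0^{2\pi}\prod_j\cosh(\rho b_j\cos j\ta)\cosh(\rho b_j\cos j\phi)\bigl(1+\tanh(\rho b_j\cos j\ta)\tanh(\rho b_j\cos j\phi)\bigr)\,d\ta\,d\phi,
\]
so the problem becomes controlling the correlation factor $\prod_j(1+\tanh(\cdots)\tanh(\cdots))$ against $1$. Since $\log(1+u)\le u$ and $\tanh(\rho b_j\cos j\ta)\approx \rho b_j\cos j\ta$, this factor is essentially $\exp(\rho^2 D(\ta,\phi))$ with $D(\ta,\phi)=\sum_j b_j^2\cos j\ta\cos j\phi$. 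Orthogonality gives $\frac1{(2\pi)^2}\int\!\!\int D^2=\tfrac14 T_n$, so $T_n\le cR_n^2/n$ forces $\rho^2 D$ to be of $L^2$-size $\lesssim \log n/\sqrt n$, which is what ultimately yields $\mathrm{Var}(I(\rho))/\E[I(\rho)]^2=O(n^{-1/10})$. Chebyshev's inequality then gives $I(\rho)\ge\tfrac12\E[I(\rho)]$ outside an exceptional set $E_n$ with $\pr(E_n)<B(c)\,n^{-1/10}$, and feeding this into the pointwise lower bound for $\max_\ta H_n$ finishes the argument.

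I expect the main obstacle to be the second moment estimate, specifically the near-diagonal region $\phi\approx\pm\ta$. There the correlation $D(\ta,\phi)$ is of order $R_n$, so $e^{\rho^2 D}$ can be as large as $n^{s}$, and the smallness of $T_n$ controls $D$ only in the averaged $L^2$ sense, not pointwise. One must therefore show separately that this bad region has measure $O(1/n)$ (because all frequencies are $\le n$, forcing $\cos j\ta\cos j\phi$ to decorrelate away from the diagonal) and that its contribution to $\E[I(\rho)^2]$ is absorbed by $\E[I(\rho)]^2$. Balancing the diagonal contribution against the gain $\exp(\tfrac12\rho^2 R_n)$ present in $\E[I(\rho)]^2$ is exactly what pins down both the admissible constant $s$ and the final exponent $n^{-1/10}$.
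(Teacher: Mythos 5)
The paper does not actually prove Lemma A --- it invokes it as a form of Lemma 4.5.1 of Salem and Zygmund --- and the proof of that cited lemma is precisely the exponential-integral/second-moment argument you outline, with the hypothesis $T_n\le c\,R_n^2/n$ entering exactly where you place it (killing the $\rho^4$ error in $\E[I(\rho)]$ and controlling $\fraction1{(2\pi)^2}\int\!\!\int D^2\lesssim T_n$ in the variance). The one step you flag as delicate, the near-diagonal contribution to $\E[I(\rho)^2]$, does resolve as you predict: Chebyshev on $D$ gives the region $\{|D|\gtrsim R_n\}$ measure $O(T_n/R_n^2)=O(1/n)$, its contribution is at most $O(n^{2s-1})$ while $(\E[I(\rho)])^2\asymp n^{s/2}$, and choosing $s$ small makes the ratio $o(n^{-1/10})$, so your proposal is essentially the same approach as the source the paper relies on.
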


Then we have

\begin {prop}
\label {constant}
Let  $\x=\{\x_j\}_{j=0}^\infty $  be a Rademacher sequence,
let   $n_0 =1 $ 
and for some $A$ large enough define $\{n_k\} $ by induction as 
in \eqref{n}. Let $\{\nu_k\} $ be any sequence of positive numbers increasing to $\infinity $. 
Then for the sequence 
$\{a_j\} $ where
$ a_0 =
a_1 =a_2=0 $ and
$$a_j=\nu_{{k}}\fraction {g(n_k)}{\sqrt{n_k\log n_k}},\qquad n_{k-1} <j\le n_k, $$
we have
\beqs
\left(\sum_{j=n_{k-1}+1}^{n_{k }} a_j^2\right)^{1/2} \le \nu_{k}\fraction {g (n_{k})} {\sqrt{\log n_{k}}},
\eeqs
but  
almost surely $u (z,\x) = \sum_{j=0}^\infty a_j\x_j r^j \cos j\ta\notin \hv $.
\end {prop}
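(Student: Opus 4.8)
The plan is to prove the two assertions separately. The coefficient inequality is a direct computation: the block $(n_{k-1},n_k]$ contains at most $n_k$ indices and on it $a_j^2=\nu_k^2 g(n_k)^2/(n_k\log n_k)$ is constant, so $\sum_{n_{k-1}<j\le n_k}a_j^2\le \nu_k^2 g(n_k)^2/\log n_k$, and taking square roots gives the stated bound. The real content is to show $u(z,\x)\notin\hv$ almost surely.

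By Theorem A it suffices to prove that almost surely $\|\sigma_{n_k}u\|_\infty/g(n_k)\to\infty$. I would work with the Fej\'er (Ces\`aro) mean $\sigma_{n_k}u$ precisely because it truncates \emph{sharply} at frequency $n_k$: it contains the low blocks and most of block $k$ but nothing from block $k+1$, and being convolution with a nonnegative kernel it is controlled by $\hv$-membership with no logarithmic loss. Write $\sigma_{n_k}u=A_k+B_k$, where $B_k$ collects the frequencies in $(n_{k-1},n_k)$ and $A_k$ those $\le n_{k-1}$. The goal is a lower bound $\|B_k\|_\infty\ge c_0\nu_k g(n_k)$ together with an upper bound $\|A_k\|_\infty\le\tfrac12 c_0\nu_k g(n_k)$, both valid almost surely for all large $k$, so that $\|\sigma_{n_k}u\|_\infty\ge\|B_k\|_\infty-\|A_k\|_\infty\ge\tfrac12 c_0\nu_k g(n_k)$, which forces the ratio to infinity since $\nu_k\to\infty$.

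For the lower bound on $B_k$ I would apply Lemma A with $b_j=(1-j/n_k)a_j$ supported on the block. Restricting to $n_{k-1}<j\le 3n_k/4$, where the Ces\`aro weight is $\ge 1/4$ and there are $\gtrsim n_k$ indices, gives $R_{n_k}=\sum b_j^2\gtrsim \nu_k^2 g(n_k)^2/\log n_k$, while $T_{n_k}=\sum b_j^4\le\sum_{\text{block}}a_j^4\lesssim R_{n_k}^2/n_k$ with an absolute constant; thus the hypothesis $T_n\le cR_n^2/n$ holds uniformly in $k$. Lemma A then yields $\|B_k\|_\infty>C\sqrt{R_{n_k}\log n_k}\ge c_0\nu_k g(n_k)$ off an exceptional set of probability $<B n_k^{-1/10}$, which is summable since $n_k\ge 2^k$, so Borel--Cantelli gives this for all large $k$ almost surely.

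The \emph{main obstacle} is the upper bound on $A_k$: a crude estimate treating all the low frequencies as a single polynomial of degree $n_{k-1}$ loses a factor $\sqrt{\log n_{k-1}}$ and can exceed the block-$k$ signal when $g$ grows slowly. The key observation that removes this is that every frequency of the unweighted boundary block $Q_{k'}=\sum_{\text{block }k'}a_j\x_j\cos j\ta$ lies below $n_k$, so $\sigma_{n_k}Q_{k'}$ is exactly the block-$k'$ part of $A_k$; since $\sigma_{n_k}$ is convolution with the nonnegative Fej\'er kernel it is a sup-norm contraction, giving $\|\sigma_{n_k}Q_{k'}\|_\infty\le\|Q_{k'}\|_\infty$. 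I would then bound each $\|Q_{k'}\|_\infty$ by Theorem C (with $\rho\sim n_{k'}$ from the one-dimensional cap estimate and $\kappa\sim n_{k'}^2$), obtaining $\|Q_{k'}\|_\infty\le C_1\nu_{k'}g(n_{k'})$ off a set of probability $<n_{k'}^{-2}$ --- one event per block, keeping the Borel--Cantelli sum finite. Summing over $k'<k$ and using that $g(n_{k'})$ grows geometrically (so $\sum_{k'<k}\nu_{k'}g(n_{k'})\le\frac{A}{A-1}\nu_{k-1}g(n_{k-1})\le\frac{1}{A-1}\nu_k g(n_k)$) gives $\|A_k\|_\infty\le\frac{C_1}{A-1}\nu_k g(n_k)$ for large $k$. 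Choosing $A$ large enough that $\frac{C_1}{A-1}\le\tfrac12 c_0$ closes the argument, and the resulting bound $\|\sigma_{n_k}u\|_\infty\ge\tfrac12 c_0\nu_k g(n_k)$ contradicts Theorem A, so $u\notin\hv$ almost surely.
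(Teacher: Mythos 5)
Your proof is correct, and its probabilistic core coincides with the paper's: Lemma A applied to the Ces\`aro-weighted top block (restricted to $(n_k/2,3n_k/4]$ so the weights and the block length are under control) gives the lower bound off a summable exceptional set; Theorem C applied block by block gives the upper bound on the low frequencies; and a final choice of $A$ large absorbs the low-frequency contribution into the top-block signal. Where you differ is in the deterministic packaging. The paper fixes $r_N=1-1/n_N$, splits $u(r_Ne^{i\theta},\x)=b_N+d_N$ at frequency $n_{N-1}$, lower-bounds $d_N$ by its Ces\`aro mean via \eqref{ces}, and upper-bounds $b_N$ by Abel summation from the partial-sum bounds $M_L(\x)\le B_\x+2K_1\nu_k g(n_k)$, concluding directly that $\max_\theta|u(r_Ne^{i\theta},\x)|\gtrsim \nu_N v(r_N)$. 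You never leave the Ces\`aro means: you lower-bound $\|\sigma_{n_k}u\|_\infty$ and invoke Theorem A, and your observation that $\sigma_{n_k}$ is a sup-norm contraction lets you pass from $\|Q_{k'}\|_\infty$ to the block-$k'$ part of $\sigma_{n_k}u$ without any Abel summation. This is a slightly cleaner route (it is the strategy the paper uses for Proposition \ref{sharpness}), while the paper's radial evaluation has the minor advantage of exhibiting the radii at which $|u|/v$ is large. Two small points to tidy up: carry along the random constant $B_\x$ coming from the finitely many exceptional blocks in your bound for $A_k$ (harmless, since $\nu_k g(n_k)\to\infty$), and note explicitly that the constants $c_0$ (from Lemma A) and $C_1$ (from Theorem C) do not depend on $A$, so the final choice of $A$ is not circular even though $A$ determines the coefficient sequence.
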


The main difference between the proof of Theorem 3.7 in \cite {ACP}  and the proof of this   result lies in the fact that we need to make it hold for slow growing weights as well, and we split the function $u$ in two parts which are estimated separately. Lemma A is applied to only a part of the function.

\begin {proof} The constants  $C_j $, $j =1,2... $ in this proof will be absolute constants.
Define the sequence $\{n_k\} $ by induction as stated, where we choose
$A \ge 2$ and such that
the following condition is satisfied:
\beq
\label{three conditions}
n_k>2n_{k-1}.\eeq
One more condition on
$A $ will be specified later.
 
Fix $r_N =1 -1/n_N $ and split $u $ into two parts
\begin{align*} u(r_Ne^{i\ta},\x)= \sum_{j=0}^\infty a_j\x_j r_N^j \cos j\ta
&=\sum_{j=0}^{n_{N -1}} a_j\x_j r_N^j \cos j\ta +\sum_{j=n_{N -1}+1}^\infty a_j\x_j r_N^j \cos j\ta\\
&=b_N (r_Ne^{i\ta},\x) +d_N (r_Ne^{i\ta},\x).
\end{align*} 
Then
\beq
\label {db} |u(r_Ne^{i\ta},\x)|=\left|\sum_{j=0}^\infty a_j\x_j r_N^j \cos j\ta\right|\ge  |d_N (r_Ne^{i\ta},\x)|-|b_N (r_Ne^{i\ta},\x)|
\eeq
We will estimate $|d_N (r_Ne^{i\ta},\x)| $ from below and $|b_N (r_Ne^{i\ta},\x)| $ from above.
Let $$h_N(\ta,\x)
=\sum_{j=n_{N -1} +1}^{n_N}\left (1 -\fraction {j}{n_N}\right) a_j\x_j r_N^j \cos j\ta$$ 
This is the Ces\`{a}ro mean of the partial sum of $d (r_Ne^{i\ta},\x)$.
By \eqref{ces}, 
\beq\label {dh}
\max_{\ta
} |d (r_Ne^{i\ta},\x) 
|\ge\max_{\ta} |h_N(\ta,\x)|. 
\eeq
We will apply Lemma A
to $h_N $.
Using 
\eqref{three conditions}, we get
\begin{align*}
R_{n_N}
&=\sum_{j=n_{N-1} +1}^{n_N} \left (1 -\fraction {j}{n_N}\right)^2 a_j^2 r_N^{2j}
\ge C_1\sum_{j=n_N/2+1}^{3n_N/4} \left (1 -\fraction {j}{n_N}\right)^2 a_j^2 \\
&\ge C_1\fraction {n_N}{4}\left(\fraction14\right)^2\fraction {\nu_{N}^2g(n_N)^2}{n_N\log n_N}
\ge C_2\fraction  {\nu_{N}^2g(n_N)^2}{\log n_N}.
\end{align*}
Furthermore,
\begin{align*}
T_{n_N} =\sum_{j=n_{N-1} +1}^{n_N} \left (1 -\fraction {j}{n_N}\right)^4 a_j^4 r_N^{4j}
\le \fraction {(n_N-n_{N-1})\nu_{N}^4 g(n_N)^4}{(n_N\log n_N)^2} \le C_3\fraction  {R_{n_N}^2} {n_N}.
\end{align*}
Then by  Lemma A,
\beq
\label {below}
\max_{\ta } |h_N(\ta,\x)
|>C_4\sqrt{R_{n_N}
\log n_N}\ge C_5\nu_{N}g(n_N)
\eeq
except for $\x \in E_{n_N}$.
Since $\sum_{k=1}^{\infinity} \pr (E_{n_k} )< \sum_{k=1}^{\infinity}B(C_3)\, n_k^{- 1/10}$, 
and this is finite by \eqref{three conditions}, we have by the Borel-Cantelli lemma that for almost all $\x $ there exists a $N_0 =N_0 (\x)$ such that \eqref{below} holds for all $N\ge N_0 $.
 Hence by \eqref{dh}, for almost all $\x $ we have for $N\ge N_0(\x) $ that
 \beq\label {d}
\max_{\ta} |d (r_Ne^{i\ta},\x)|
 \ge C_5\nu_{N}g(n_N).  \eeq

Let $S_n (\ta,\x) = \sum_{k=0}^n a_k \x_k \cos k\ta $ and $M_n (\x)=\max_{0 \le \ta\le 2\pi} |S_n (\ta,\x)| $. Let $j=j (n) $  be such that $n_{j-1} <n\le n_{j} $ and define $Q_n(\ta,\x)=S_{n} (\ta,\x)-S_{n_{j-1}} (\ta,\x)$ and $\M_n (\x)=\max_{0 \le \ta\le 2\pi} |Q_n (\ta,\x)| $. 
Just as in the proof of Theorem \ref {improved d}, it can be shown that
 for almost all $\x $ there is   $J =J(\x)$ such that 
\beqs
\M_{n} (\x) \le 
 K_1\sqrt{ \left(\sum_{l=n_{j-1}+1}^{n}  a_l^2 \right)\log n_{j}}
\le K_1\nu_{{j}}g(n_{j})
\eeqs
 for $n\ge n_J $. 
    Fix $L $ and let $n_{k-1}< L\le n_{k}$. 
  Then for a.e. $\x$ and $L\ge n_J(\x)$, 
\begin{equation}\label {m}
\begin{split}
 M_L(\x)&\le  M_{n_{J-1}}(\x) +\sum_{j=J}^{k -1}\M_{n_j}(\x)+\M_{L}(\x) \le B_\x
 +K_1\sum_{j=J}^{k  }\nu_{j}g(n_{j})\\
 &\le B_\x +K_1\nu_{k}g(n_{k })\sum_{l=0}^{k-J}  \fraction1 {A^l}
 \le
  B_\x +2K_1 \nu_{k}g(n_{k }).
 \end{split}
\end{equation}
Let $B_\x $ be large enough to make the inequality  $M_L(\x)\le B_\x +2K_1  g (L)$ hold also for $0<L\le n_{J} $, and also let $M_0(\x)\le B_\x $. 

We will now 
estimate $b_N(r_Ne^{i\ta},\x) $. 
   By summation by parts 
   and \eqref{m},
 \begin{gather*}
 \left|b_N(r_Ne^{i\ta},\x)\right| =\left|\sum_{l=0}^{n_ {N-1}} a_l \x_l r_N^l\cos l\ta\right| \\
 =\left|r_N^{n_ {N-1}} S_{n_ {N-1}}(\ta,\x) -  (1 -r_N)\sum_{l=0}^{n_ {N-1}-1} S_l(\ta,\x)  r_N^l\right|\\
 \le  r_N^{n_{N-1}}M_{n_{N-1}}(\x)  + (1 -r_N)\left(B_\x+ 
 \sum_{j=0}^{N-2}  (B_\x +2K_1 \nu_{j}g(n_{j})) \sum_{l=n_{j}}^{n_{j+1}-1}r_N^l\right)
 \\
 \le  (2K_1 \nu_{N-1}g(n_{N-1}) +B_\x) +B_\x + 2K_1\nu_{N-1}g(n_{N-1}) \sum_{j=0}^{N-2}  \fraction1 {A^j}
 \end {gather*}
  Then
  \beq
  \label {b}
  \max_{\ta } |b_N(r_Ne^{i\ta},\x)|\le 2B_\x +6K_1  \nu_{N-1}g(n_{N-1})  \qquad \mathrm{for\, a.e.\, }\x.    \eeq

 For almost every $\x$ and $N \ge J(\x)$  we get by letting
$A\ge 12K_1/C_5 $ and using \eqref{db}, \eqref{d} and \eqref{b} that
\begin{align*}
\max_{\ta } |u(r_Ne^{i\ta},\x)|
& >C_5\nu_{N}g(n_{N})-6K_1\nu_{N-1}g(n_{N-1})-2B_\x\\
&\ge C_5\nu_{N}g(n_{N})-\fraction {6K_1} {A}\nu_{N-1}g(n_{N})-2B_\x\\
&\ge \fraction  {C_5}2 \nu_{N}g(n_{N}) -2B_\x
=\fraction  {C_5}2\nu_{N}v(r_N) -2B_\x. 
\end{align*}
 Then almost surely $u (z,\x) = \sum_{j=0}^\infty a_j \x_j r^j \cos j\ta\notin \hv $.

\end {proof}


\section{Some results for deterministic functions}
\label {necessary section}
\subsection{Necessary conditions on a general function in $\hv $} 
We will now prove some estimates for the growth of the coefficients of functions in $\hv $.
We know that $ |\fa_j|\le  C g (j) $ from for example \eqref{square sum one}. For Hadamard gap series there exist examples  of functions in $\hv $ 
for  which
$$\limsup_{j\rightarrow \infinity}\fraction {| \fa_j |}{g (j)}>0,$$
for example $u (z) =\sum_{k=0}^\infty g(n_k) r^{n_k}\cos n_k \ta$ where $\{n_k\}$ is defined by \eqref {n}, see \cite{E}. But all the coefficients cannot grow this fast if $u\in\hv$: 
\begin {prop}
\label {liminf}
Let $u (re^{i\ta}) = \sum_{j=0}^\infty (a_{j0}  r^j \cos j\ta +a_{j1}  r^j \sin j\ta)
\in\hv $ and 
define a sequence $\{n_k\} $ as before. Let $k=k (j) $  be such that $n_{k-1} <j\le n_{k} $. 
Then
\beq\label {the limit}
\liminf_{j\rightarrow \infinity} \fraction {| \fa_j |\sqrt{n_{k}}} {g (j)}<\infty.  
\eeq
Moreover, there exists a function  in $\hv $ for which $\liminf_{j\rightarrow \infinity} | \fa_j |\sqrt{n_{k}}/ {g (j)}>0 $, 
so the result  is sharp.
\end {prop}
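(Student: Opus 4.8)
The plan is to treat the two assertions separately: the necessary bound \eqref{the limit} needs only the deterministic inequality \eqref{square sum one}, while the sharpness half is the function promised in Proposition~\ref{three things}(iii). For \eqref{the limit} I would argue by pigeonhole on a single block. Fix $k$ and recall that the block $n_{k-1}<j\le n_k$ contains $n_k-n_{k-1}\ge n_k/2$ indices, since $n_k\ge 2n_{k-1}$. Hence the minimum of $|\fa_j|^2$ over the block is bounded by its average, and by \eqref{square sum one},
\beqs
\min_{n_{k-1}<j\le n_k}|\fa_j|^2\le\fraction{2}{n_k}\sum_{j=n_{k-1}+1}^{n_k}|\fa_j|^2\le\fraction{2}{n_k}\sum_{j=0}^{n_k}|\fa_j|^2\le\fraction{2B}{n_k}\,g(n_k)^2.
\eeqs
So for each $k$ there is an index $j_k\in(n_{k-1},n_k]$ with $|\fa_{j_k}|\sqrt{n_k}\le\sqrt{2B}\,g(n_k)$. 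It then remains to pass from $g(n_k)$ to $g(j_k)$: exactly as in the proof of Corollary~\ref{induction}, the doubling condition together with the definition \eqref{n} gives $g(n_k)\le DA\,g(n_{k-1})$, and since $j_k>n_{k-1}$ and $g$ is increasing we have $g(n_{k-1})\le g(j_k)$. Therefore $|\fa_{j_k}|\sqrt{n_k}/g(j_k)\le\sqrt{2B}\,DA$ for every $k$, and as $j_k\to\infty$ this yields infinitely many indices along which the ratio stays bounded, which is precisely \eqref{the limit}.

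For sharpness I would write out the function of Proposition~\ref{three things}(iii), taking $a_{j1}=0$ and
\beqs
a_j=a_{j0}=\fraction{g(n_k)}{\sqrt{n_k}},\qquad n_{k-1}<j\le n_k,
\eeqs
so that $|\fa_j|=a_j$. For this choice $|\fa_j|\sqrt{n_k}/g(j)=g(n_k)/g(j)\ge 1$ for every $j$, because $g$ is increasing and $j\le n_k$; hence the $\liminf$ is at least $1>0$. The only genuine point is to produce signs $\x_j\in\{-1,1\}$ making $u\in\hv$. A random choice is not good enough here, since these coefficients exceed by a factor $\sqrt{\log n_k}$ what Corollary~\ref{induction} tolerates; instead I would choose the signs deterministically, block by block, by means of Rudin--Shapiro polynomials, which is the construction used for Theorem~1.12(b) in \cite{BST}. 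After factoring out $e^{in_{k-1}\ta}$ and applying Rudin--Shapiro to the shifted frequencies, one obtains signs with $\bigl\|\sum_{n_{k-1}<j\le n_k}\x_j\cos j\ta\bigr\|_\infty\le C\sqrt{n_k-n_{k-1}}\le C\sqrt{n_k}$, so the $k$th block of $u$ has supremum norm at most $C\sqrt{n_k}\cdot g(n_k)/\sqrt{n_k}=Cg(n_k)$, with no logarithmic loss.

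To conclude I would feed these block bounds into the same summation-by-parts argument already carried out in the proof of Theorem~\ref{improved d} (equivalently, estimate the Ces\`{a}ro means and invoke Theorem~A): the estimates on each block combine, through the doubling condition, into $\|\sigma_n u\|_\infty\le C'g(n)$, whence $u\in\hv$. The routine parts are the averaging step for \eqref{the limit} and the partial-summation bookkeeping, which merely repeats the computation done for Theorem~\ref{improved d}. The real obstacle is the sign construction in the sharpness half: one must beat the generic random bound by the factor $\sqrt{\log n_k}$, and this is exactly where a probabilistic argument cannot succeed and one has to invoke the explicit Rudin--Shapiro construction of \cite{BST}.
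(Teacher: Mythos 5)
Your proof is correct and follows essentially the same route as the paper: a pigeonhole/averaging argument over the block $(n_{k-1},n_k]$ combined with the doubling relation $g(n_k)\le DA\,g(n_{k-1})$ for \eqref{the limit}, and the Rudin--Shapiro construction from Theorem 1.12(b) of \cite{BST} (block sup-norms $\le Cg(n_k)$, then Ces\`{a}ro means and Theorem A) for sharpness. The only immaterial difference is that you average the $\ell^2$ bound \eqref{square sum one}, whereas the paper averages the $\ell^1$ bound $\sum_{j\le n}|\fa_j|\le Cg(n)\sqrt{n}$; the two inputs are equivalent for this purpose via Cauchy--Schwarz.
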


A related result is given in Theorem 1.16 (a) in \cite{BST}, there it is proven that if $u (z) =\sum_{j=0}^\infinity b_j z^j\in A_v^\infinity $ and $|b_n | $ increases with $j $, then $|b_j|=O(g(j)/\sqrt{j}) $.

When $g $ grows like $x^\alpha $ it would be equivalent to replace  $n_k $ in \eqref{the limit} by $j $, but for slow-growing functions like $\log x $, that would give a weaker statement since $n_k $ in that case grows very fast.

\begin {proof}
In Theorem 1.12 (b) in \cite {BST} it is proven that 
\beq
\label {sq}\sum_{j=0}^n |\fa_j|\le C g(n)\sqrt{n}
\eeq
whenever $u\in\hv $.  Then since $n_k\ge 2n_{k-1} $, 
$$\fraction {n_k}2\min_{j\in (n_{k -1}, n_k]} |\fa_j|\le  \sum_{j=n_{k -1}+1}^{n_{k}} |\fa_j|\le C  g(n_k)\sqrt{n_k},$$
thus $$\min_{j\in (n_{k -1}, n_k]} |\fa_j|\le 2C  g(n_k)/\sqrt{n_k}\le 2ADCg(j)/\sqrt{n_k}$$ and 
where $D $ and $A $ are as in \eqref{gdouble} and \eqref{n}, respectively, and the result follows.

The function used in \cite {BST} to prove that Theorem 1.12 (b) is sharp can also be used here. To construct this function, it is used that  there exists a sequence $\{\x_j\}$ in $\{-1,1\} $ such that the polynomials $$P_m (z) =\fraction {\sum_{j=1}^m \x_j z^j} {\sqrt{m}} $$
satisfy $| |P_m | |_\infinity\le 5 $, see \cite{R}. These are called 
Rudin-Shapiro polynomials. Now define
$$u(z) =\Re\sum_{k=1}^\infinity g (n_k)z^{n_{k-1}}P_{n_k -n_{k -1}}(z) = \sum_{k=1}^\infinity \fraction {g (n_k)r^{n_{k-1}}} {\sqrt{n_k-n_{k -1}}}\sum_{j=1}^{n_k-n_{k-1}}\x_{j}r^j\cos (n_{k-1}+j) \ta.$$
By \eqref{ces} we have $| |\sigma_n u| |_\infinity \le | |s_n u | |_\infinity $, so $u\in\hv $ by Theorem A since $| |s_nu | |_\infinity\le Cg (n) $. The coefficients have the desired growth since $n_k\ge 2 n_{k-1} $.
\end {proof}
The function constructed in the above proof also proves Proposition \ref {three things} (iii).

The estimate $|\fa_j|\le p_j g(j)/\sqrt{j} $, where $\{p_j\} $ is a sequence going to infinity, holds for most of the coefficients. More precisely:

\begin {prop}
\label {fraction}
Assume $u (re^{i\ta}) = \sum_{j=0}^\infty (a_{j0}  r^j \cos j\ta +a_{j1} r^j \sin j\ta)
\in \hv$
and let $p_j $ be an increasing sequence of positive numbers such that $\lim_{j\rightarrow \infinity} p_j =\infinity $.
 Define $N(n) $ as the number of $\fa_j $ satisfying $j\le n $ and $|\fa_j |\le p_j g(j)/\sqrt{j}$. Then
$$\lim_{n\rightarrow \infinity} N (n)/n =1.$$
\end {prop}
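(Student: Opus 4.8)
The plan is to argue by complementary counting. Write $B(n)=(n+1)-N(n)$ for the number of indices $j\le n$ with $|\fa_j|>p_j g(j)/\sqrt{j}$, the \emph{bad} indices; since $(n+1)/n\to 1$, it suffices to prove that $B(n)/n\to 0$. The only deterministic input I would use is inequality \eqref{sq}, namely $\sum_{j=0}^n|\fa_j|\le C g(n)\sqrt{n}$ for $u\in\hv$, which is already quoted from \cite{BST} in the proof of Proposition \ref{liminf}. The rough intuition is that \eqref{sq} allows only $O(n/p)$ coefficients of size $\gtrsim p\,g/\sqrt{\cdot}$ in a scale-$n$ block, and $p_j\to\infty$ forces this count to be $o(n)$.

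First I would estimate the number of bad indices inside a single dyadic block $(M,2M]$. For $j$ in this block, monotonicity of $p_j$ gives $p_j\ge p_M$, while $\sqrt{j}\le\sqrt{2M}$, and the doubling condition \eqref{gdouble} gives $g(j)\ge g(M)\ge g(2M)/D$. Hence each bad index in $(M,2M]$ contributes at least $p_M g(M)/\sqrt{2M}$ to the sum $\sum_{j\le 2M}|\fa_j|$, which by \eqref{sq} and doubling is at most $C g(2M)\sqrt{2M}\le CD\,g(M)\sqrt{2M}$. Comparing the two sides, the number of bad indices in $(M,2M]$ is at most $2CD\,M/p_M$, that is, at most $(2CD/p_M)$ times the length of the block.

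Then I would sum over dyadic blocks. Fix $\ve>0$; since $p_M\to\infty$ there is an $M_0$ with $2CD/p_M\le\ve$ for all $M\ge M_0$. Covering $(M_0,n]$ by the blocks $(2^iM_0,2^{i+1}M_0]$ and noting that the lower endpoint of each such block is $\ge M_0$, the bad indices in each block number at most $\ve$ times its length, so the bad indices in $(M_0,n]$ number at most $\ve n$. Adding the at most $M_0+1$ indices below $M_0$ yields $B(n)\le(M_0+1)+\ve n$, whence $\limsup_{n\to\infty}B(n)/n\le\ve$. Since $\ve$ is arbitrary, $B(n)/n\to 0$ and therefore $N(n)/n\to 1$.

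The one point needing care — and really the crux — is the interplay between the doubling of $g$ and the divergence of $p_j$: doubling is exactly what lets me replace $g(j)$ by the single value $g(M)$ throughout a block and cancel it against the $g(2M)$ on the right of \eqref{sq}, after which all the gain comes from $p_M\to\infty$. I expect no essential difficulty beyond this bookkeeping; the finitely many small indices (including $j=0$, where $g(j)/\sqrt{j}$ is undefined) are harmless, since they contribute only $O(1)$ to $B(n)$.
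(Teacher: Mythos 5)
Your proposal is correct and follows essentially the same route as the paper: the paper's proof likewise counts the bad indices $I_k=|\{j:2^{k-1}<j\le 2^{k},\ |\fa_j|>p_jg(j)/\sqrt{j}\}|$ in dyadic blocks, bounds $I_k< DC\,2^{k}/p_{2^{k-1}}$ via \eqref{sq} and doubling, and concludes $\sum_k I_k=o(n)$. Your $\varepsilon$-bookkeeping at the end just makes explicit the Ces\`aro-type step the paper leaves implicit.
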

A similar result was proved by F. G. Avhadiev and I. R. Kayumov in \cite {AK} for Bloch functions by a different argument.
\begin {proof}
Let $I_k =|\{j:2^{k-1}<j\le 2^{k}, |\fa_j|> p_j g(j)/\sqrt{j}\}|$. Since by \eqref {sq} we have
$$I_k p_{2^{k-1}} g(2^{k-1})/\sqrt{2^{k}}<  \sum_{j=2^{k -1}+1}^{2^{k}} |\fa_j|\le C  g(2^{k})\sqrt{2^{k}},$$
it follows that $I_k <DC 2^{k}/p_{2^{k-1}} $. If $2^{m-1} <n\le 2^m $, then 
$$N(n) \ge n -\sum_{k=1}^m I_k= n -o (n).$$
\end {proof}

\section {Application to other spaces}
\label {application}
\subsection {Bloch-type spaces} 
We will now see that our results for growth spaces can easily be applied to Bloch-type spaces $\bl_w$. We will consider these spaces in several dimensions, and they are defined as the spaces of functions that fulfill
$$| |u ||_{\bl_w} =  |u (0) |+\sup_{z\in\B} w(|z|)|\nabla u (z)|  <\infty, $$
where $w $  is as described in the introduction.
We always assume that $w $ fullfills a condition equivalent to \eqref{vdouble}:
\beq
\label {wdouble}
 w\left(1-\fraction {d} {2}\right) \ge B w(1-d).
\eeq
Examples are $w (r)=( 1 -r )^{\alpha }$ and $
 (\log \fraction1 { 1 -r} )^{-\alpha} $ for $\alpha>0 $.

The function $u(x,\x) =\sum_{m=0}^\infty \sum_{l=0}^{L_m} a_{ml} \x_{ml} r^{m} Y_{ml} \left(\fraction {x} {r}\right)$ is in $\bl_w $ 
 if and only if all partial derivatives of $u$
are in  $\hvb $ for $v (r) =1/w (r) $. We can write $Y_{ml}(x)$ instead of  $r^{m} Y_{ml} \left(\fraction {x} {r}\right)$, and $Y_{ml} (x) $ is a homogeneous harmonic polynomial. By Theorem III in \cite {oKe} we have $|\fraction {\partial} {\partial x_i}Y_{ml} \left(x\right)| \le m $.
Then
\begin {equation*}
\fraction {\partial} {\partial x_i}u(x,\x) 
 =\sum_{m=1}^\infty \sum_{l=0}^{L_m} a_{ml} \x_{ml}  \fraction {\partial} {\partial x_i}Y_{ml} \left(x\right) 
 =\sum_{m=1}^\infty \sum_{l=0}^{L_m}m a_{ml} \x_{ml} \fraction {\partial} {\partial x_i}\fraction {Y_{ml} \left(x\right) } {m}
 \end {equation*}
By Remark \ref {basis} and since $\fraction {\partial} {\partial x_i}\fraction {Y_{ml} \left(x\right) } {m}  $  is a homogeneous harmonic polynomial bounded by $1 $ on the sphere, we can apply  Theorem \ref {improved d} with $w (r) =1/v (r) $.
Then the next result generalizes \eqref{bloch}
to all weights that satisfy \eqref{wdouble}. It also generalizes Theorem 1 by J.~Guo and P.~Liu in \cite {GL}, which is proved for $\alpha $-Bloch functions.
\begin{cor}
\label{random improved}
Let $u(x,\x) =\sum_{m=0}^\infty \sum_{l=0}^{L_m} a_{ml} \x_{ml} r^{m} Y_{ml} \left(\fraction {x} {r}\right) $ 
where $\x=\{\x_{ml}\}
$  is a subnormal sequence. If there exists an increasing sequence $\{n_k\} $ of positive  integers such that  for all $k $ we have
$g (n_{k+1})\le C_1 g (n_k) $  and
$$\sum_{i=1}^k \sqrt{\left(\sum_{m=n_{i-1}+1}^{n_i} m^2\fa_{m}^2 \right)\log n_{i}}\le \fraction {C_2} {w (1 -1/n_k)}, $$
then $u \in\bl_w $ almost surely.
\end{cor}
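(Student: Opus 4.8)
The plan is to reduce the corollary to an application of Theorem \ref{improved d}, following the reduction already sketched immediately before the statement. First I would record the equivalence noted there: $u\in\bl_w$ exactly when $|\nabla u(x)|\le Kv(|x|)$ with $v=1/w$, and since $|\nabla u|^2=\sum_p|\partial u/\partial x_p|^2$ this holds (up to the dimensional constant $\sqrt{d+1}$) if and only if every partial derivative $\partial u/\partial x_p$ lies in $\hvb$ with $v=1/w$. It therefore suffices to show that each $\partial u/\partial x_p$ belongs to $\hvb$ almost surely.

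Next I would differentiate termwise. Writing $Y_{ml}(x)$ for $r^mY_{ml}(x/r)$, each $Y_{ml}$ is a homogeneous harmonic polynomial, so $\partial Y_{ml}/\partial x_p$ is a homogeneous harmonic polynomial of degree $m-1$; by Theorem III in \cite{oKe} it is bounded by $m$ on $S^d$, hence $Z_{ml}^{(p)}:=\frac1m\,\partial Y_{ml}/\partial x_p$ is a homogeneous harmonic polynomial with $\|Z_{ml}^{(p)}\|_\infty\le1$. This gives
$$\frac{\partial}{\partial x_p}u(x,\x)=\sum_{m=1}^\infty\sum_{l=0}^{L_m}(m a_{ml})\,\x_{ml}\,Z_{ml}^{(p)}(x),$$
which is a random series of the type handled by Theorem \ref{improved d}, now with coefficients $m a_{ml}$. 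By Remark \ref{basis} the theorem applies to any family of spherical harmonics bounded by $1$, so the fact that $\{Z_{ml}^{(p)}\}$ need not be a basis causes no difficulty.

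The effective coefficient vector of the differentiated series satisfies $\sum_l(m a_{ml})^2=m^2|\fa_m|^2$, and with $v=1/w$ one has $g(n_k)=v(1-1/n_k)=1/w(1-1/n_k)$. Consequently the hypothesis of Theorem \ref{improved d} for $\partial u/\partial x_p$ becomes
$$\sum_{i=1}^k\sqrt{\Big(\sum_{m=n_{i-1}+1}^{n_i}m^2|\fa_m|^2\Big)\log n_i}\le C_2\,g(n_k)=\frac{C_2}{w(1-1/n_k)},$$
which is precisely the assumption of the corollary, while $g(n_{k+1})\le C_1g(n_k)$ supplies the remaining condition. Theorem \ref{improved d} then yields $\partial u/\partial x_p\in\hvb$ almost surely for each fixed $p$. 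As $p$ ranges over the finite set $\{1,\dots,d+1\}$, the intersection of these finitely many probability-one events is again probability one, so $|\nabla u|\le Kv(|x|)$ almost surely and $u\in\bl_w$ almost surely.

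I expect no genuine obstacle, since Theorem \ref{improved d} and Remark \ref{basis} were designed for exactly this use. The only points needing care are the equivalence between $\bl_w$-membership and the simultaneous control of all partial derivatives in $\hvb$, and the bookkeeping that differentiation replaces the coefficient $a_{ml}$ by $m a_{ml}$ and hence $|\fa_m|^2$ by $m^2|\fa_m|^2$; both are routine given the normalization $\|Z_{ml}^{(p)}\|_\infty\le1$ from \cite{oKe}.
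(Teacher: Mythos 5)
Your proposal is correct and follows essentially the same route as the paper: termwise differentiation, the Kellogg bound $\lvert\partial Y_{ml}/\partial x_p\rvert\le m$ to renormalize the differentiated harmonics to sup-norm at most $1$, Remark \ref{basis} to dispense with the basis requirement, and Theorem \ref{improved d} applied with $v=1/w$ to the coefficients $m\,a_{ml}$. The only additions beyond the paper's own argument are the explicit handling of the finite intersection of almost-sure events over the $d+1$ partial derivatives and the dimensional constant relating $\lvert\nabla u\rvert$ to the individual derivatives, both of which are routine and correctly stated.
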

Similarly, Corollary \ref {induction} gives:
\begin {cor}
\label {induction b}
Let $\x=\{\x_{ml}\} $  be a subnormal sequence, let $A>1 $,   $n_0 =1 $ 
and define $n_k $ by induction as 
$n_{k+1}=\min \{l \in \N: w(1 -1/l)A \le w (1 -1/n_k)
\}.$
If
$$\left(\sum_{m=n_{k-1}+1}^{n_{k}} m^2 \fa_m^2\right)^{1/2} \le \fraction {C} {w(1 -1/n_{k})\sqrt{\log n_{k}}}, $$
then $u(x,\x) =\sum_{m=0}^\infty \sum_{l=0}^{L_m} a_{ml} \x_{ml} r^{m} Y_{ml} \left(\fraction {x} {r}\right)  
\in\bl_w $ almost surely.
\end {cor}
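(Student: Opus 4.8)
The plan is to deduce the result from Corollary~\ref{induction}, applied to the partial derivatives of $u$, in exactly the same manner as Corollary~\ref{random improved} is obtained from Theorem~\ref{improved d}. First I would invoke the characterization recorded just above: $u\in\bl_w$ if and only if every partial derivative $\partial u/\partial x_i$ belongs to $\hvb$ with $v(r)=1/w(r)$. Since the number of coordinate directions is finite, it suffices to show that each $\partial u/\partial x_i\in\hvb$ almost surely, because a finite intersection of almost-sure events is again almost sure.

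Next I would differentiate the series termwise. Writing $Y_{ml}(x)$ for the homogeneous harmonic polynomial $r^mY_{ml}(x/r)$, one gets
\[
\frac{\partial}{\partial x_i}u(x,\x)=\sum_{m=1}^\infty\sum_{l=0}^{L_m} m\,a_{ml}\x_{ml}\,\frac{\partial}{\partial x_i}\frac{Y_{ml}(x)}{m}.
\]
By Theorem~III in \cite{oKe} each $\frac{\partial}{\partial x_i}\frac{Y_{ml}(x)}{m}$ is a homogeneous harmonic polynomial bounded by $1$ on the sphere, so $\partial u/\partial x_i$ is again a random series of the type treated in Corollary~\ref{induction}, now with coefficients $m\,a_{ml}$ in place of $a_{ml}$ and these normalized derivatives in place of the $Y_{ml}$; by Remark~\ref{basis} the collection need not be a basis, so Corollary~\ref{induction} applies. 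The hypotheses then match after the substitution $g(x)=v(1-1/x)=1/w(1-1/x)$: under it the recursion defining $\{n_k\}$ in the statement is precisely \eqref{n}, one has $1/w(1-1/n_k)=g(n_k)$, and the block sum of the new coefficients is $\sum_{m=n_{k-1}+1}^{n_k} m^2|\fa_m|^2$, so the assumed bound is exactly the hypothesis of Corollary~\ref{induction}. Applying that corollary gives $\partial u/\partial x_i\in\hvb$ almost surely for each $i$, whence $u\in\bl_w$ almost surely.

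The argument is essentially bookkeeping and I expect no serious obstacle; the only points demanding care are the degree shift caused by differentiation and the accompanying power of $r$. A partial sum $\sum_{m=1}^{n}\sum_l m\,a_{ml}\x_{ml}\,\partial_{x_i}(Y_{ml}/m)$ is a polynomial of degree $n-1$ rather than $n$, but since the Bernstein inequality \eqref{bs} and Lemma~\ref{several dimensions} only require membership in $F_n$, every estimate in the proof of Theorem~\ref{improved d}, and hence of Corollary~\ref{induction}, still applies. Likewise the homogeneous polynomial $\frac{\partial}{\partial x_i}(Y_{ml}(x)/m)$ carries the factor $r^{m-1}$ rather than $r^m$; this differs from the form in Corollary~\ref{induction} only by the factor $1/r$, which is bounded near the boundary where membership in $\hvb$ is decided, so it is absorbed into the constant. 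One should also note that termwise differentiation is justified by local uniform convergence of the series on $\B$, and that the passage from ``each $\partial u/\partial x_i\in\hvb$'' to ``$u\in\bl_w$'' uses $|\nabla u|\le\sqrt{d+1}\,\max_i|\partial_i u|$ together with the stated equivalence.
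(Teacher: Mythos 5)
Your proposal is correct and follows essentially the same route as the paper: the paper derives this corollary by applying Corollary~\ref{induction} (via the same reduction used for Corollary~\ref{random improved}) to the partial derivatives $\partial u/\partial x_i$, using the equivalence $u\in\bl_w\iff\partial u/\partial x_i\in\hvb$ with $v=1/w$, the bound $|\partial_{x_i}Y_{ml}|\le m$ from Kellogg's theorem, and Remark~\ref{basis}. Your additional remarks on the degree shift and the factor $r^{m-1}$ are sound and only make explicit what the paper leaves implicit.
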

The same results hold for analytic Bloch-type spaces as well, see the next section.

We will give examples of what the last corollary means for $w (r)=( 1 -r )^{\alpha }$ and $
 (\log \fraction1 { 1 -r} )^{-\alpha}  $ for $\alpha>0 $. The sequence $n_k $ can be chosen as $n_k=2^k$ and $n_k=2^{2^k} $, respectively, and a sufficient condition to be in $\bl_w $ almost surely when $w (r)=( 1 -r )^{\alpha }$ is 
$$\left(\sum_{m=2^{k-1}+1}^{2^{k}} m^2 \fa_m^2\right)^{1/2} \le C\fraction { 2^{\alpha k}} {\sqrt{k}}, $$
and for $w (r)= 
 (\log \fraction1 { 1 -r} )^{-\alpha}  $ 
it is
$$\left(\sum_{m=2^{2^{k-1}}+1}^{2^{2^{k}}} m^2 \fa_m^2\right)^{1/2} \le C  2^{\alpha 2^{k}-k/2}. 
 $$

In the same way as in Proposition \ref{sharpness} and \ref {constant} it can be shown that Corollary \ref {induction b} is sharp, just replace $a_j $ by $ja_j $ when defining the coefficients.

Proposition \ref{liminf} and \ref {fraction} can also be applied to Bloch-type functions in the disk: 
\begin {prop}
\label {liminf b}
Let $u (re^{i\ta}) = \sum_{j=0}^\infty (a_{j0} r^j \cos j\ta +a_{j1} r^j \sin j\ta) 
\in\bl_w $ and define 
a sequence $\{n_k\} $ as before. Let $k=k (j) $  be such that $n_{k -1} <j\le n_{k} $. 
Then
\beqs
\liminf_{j\rightarrow \infinity}| \fa_j |j w (1 -1/j) \sqrt{n_{k}}<\infty.  
\eeqs
Moreover, there exists a function  in $\bl_w $ for which $\liminf_{j\rightarrow \infinity} | \fa_j |\sqrt{n_{k}}/ {g (j)}>0 $, 
so the result  is sharp. 
\end {prop}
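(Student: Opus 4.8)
The plan is to reduce the Bloch-type estimate to the growth-space estimate of Proposition \ref{liminf} by passing to the complex derivative. Write $u=\Re f$ for an analytic $f(z)=\sum_{j\ge0}c_jz^j$ with $c_j=a_{j0}-ia_{j1}$, so that $|c_j|=|\fa_j|$ and, by the Cauchy--Riemann equations, $|\nabla u|=|f'|$. Thus $u\in\bl_w$ is equivalent to $\sup_z w(|z|)|f'(z)|<\infty$, i.e. to $f'\in A^\infty_v$ with $v=1/w$; in particular the harmonic function $\Re f'$ satisfies $|\Re f'(z)|\le|f'(z)|\le Cv(|z|)$ and so lies in $\hv$. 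Since $f'(z)=\sum_{m\ge0}(m+1)c_{m+1}z^m$, the level-$m$ coefficient of $\Re f'$ has modulus $(m+1)|c_{m+1}|=(m+1)|\fa_{m+1}|$, which is the source of the extra factor $j$ in the statement.

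First I would apply the estimate \eqref{sq} (from the proof of Proposition \ref{liminf}) to $\Re f'\in\hv$: this gives $\sum_{m=0}^n(m+1)|\fa_{m+1}|\le Cg(n)\sqrt n$, and after reindexing and using monotonicity of $g$ and $\sqrt{\cdot}$, one gets $\sum_{j=1}^N j|\fa_j|\le Cg(N)\sqrt N$ for every $N$. Then I would run the block-minimum argument exactly as in Proposition \ref{liminf}: since $n_k\ge2n_{k-1}$,
\[ \fraction{n_k}{2}\min_{n_{k-1}<j\le n_k}j|\fa_j|\le\sum_{j=n_{k-1}+1}^{n_k}j|\fa_j|\le Cg(n_k)\sqrt{n_k}, \]
so a minimizer $j_k\in(n_{k-1},n_k]$ satisfies $j_k|\fa_{j_k}|\le2Cg(n_k)/\sqrt{n_k}$. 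Using $1/g(j)=w(1-1/j)$ and the doubling bound $g(n_k)\le DAg(n_{k-1})\le DAg(j_k)$, this yields $|\fa_{j_k}|\,j_k\,w(1-1/j_k)\sqrt{n_k}=j_k|\fa_{j_k}|\sqrt{n_k}/g(j_k)\le2CDA$. As $j_k\to\infty$ this proves $\liminf_{j\to\infty}|\fa_j|\,j\,w(1-1/j)\sqrt{n_k}<\infty$.

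For sharpness I would integrate the extremal growth-space example from Proposition \ref{liminf}. With the Rudin--Shapiro polynomials $P_m$, $\|P_m\|_\infty\le5$, set $f(z)=\sum_{k\ge1}g(n_k)z^{n_{k-1}}P_{n_k-n_{k-1}}(z)$ and let $F$ be its antiderivative, $F'=f$. Bounding the analytic partial sums $\|s_nf\|_\infty\ls g(n)$ as in the proof of Proposition \ref{liminf} shows $f\in A^\infty_v$, hence $w(|z|)|\nabla\Re F|=w(|z|)|f(z)|\le C$ and $\Re F\in\bl_w$. The coefficients of $\Re F$ at level $i$ have modulus $|c_{i-1}|/i\simeq g(n_k)/(i\sqrt{n_k})$ for $i-1\in(n_{k-1},n_k]$; since $g(i)\simeq g(n_k)$ and $n_{k(i)}\simeq n_k$ on each block, $|\fa_i|\,i\,w(1-1/i)\sqrt{n_{k(i)}}=|c_{i-1}|\sqrt{n_{k(i)}}/g(i)\simeq1$, so this liminf is strictly positive and the estimate is sharp.

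The main obstacle I expect is the sharpness half rather than the upper bound: one must confirm that the integrated function genuinely belongs to $\bl_w$, which requires membership of $f=F'$ in the \emph{analytic} growth space $A^\infty_v$ (not merely $\Re f\in\hv$), and this in turn rests on the analytic partial-sum bound $\|s_nf\|_\infty\ls g(n)$ for the Rudin--Shapiro construction. The forward estimate is routine once the coefficient correspondence $|\fa_m|\mapsto(m+1)|\fa_{m+1}|$ for $\Re f'$ is in hand; I would also carefully track the factor $j$ in $|\fa_j|\,j\,w(1-1/j)\sqrt{n_k}$ (equivalently $|\fa_j|\,j\sqrt{n_k}/g(j)$) on both sides, since it is precisely this factor, coming from differentiation, that distinguishes the Bloch-type statement from Proposition \ref{liminf}.
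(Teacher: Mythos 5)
Your proof is correct and is essentially the argument the paper intends: the paper gives no explicit proof of Proposition \ref{liminf b}, saying only that Proposition \ref{liminf} ``can be applied'' to Bloch-type functions with $a_j$ replaced by $ja_j$, and your reduction via $|\nabla u|=|f'|$, the coefficient shift $|\fa_m|\mapsto (m+1)|\fa_{m+1}|$, the block-minimum argument from \eqref{sq}, and integration of the Rudin--Shapiro example for sharpness is exactly that recipe carried out in detail. Note only that the sharpness claim as printed, $\liminf |\fa_j|\sqrt{n_k}/g(j)>0$, appears to have a typographical omission of the factor $j$ (it should read $\liminf |\fa_j|\,j\,w(1-1/j)\sqrt{n_k}>0$ to match the upper bound), and your construction establishes precisely this corrected version.
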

For  $w(r) = (1 -r)^\alpha $ this is $\liminf_{j\rightarrow \infinity}| \fa_j |j^{1 -\alpha}\sqrt{2^k}<\infty $, and since $n_k $ in this case does not grow very fast, this is equivalent to
 $$\liminf_{j\rightarrow \infinity}| \fa_j |j^{1 -\alpha}\sqrt{j}<\infty.  $$
For the usual Bloch functions we have $\liminf_{j\rightarrow \infinity}| \fa_j |\sqrt{j}<\infty.  $

\begin {prop}
\label {fraction b}
Assume $u (re^{i\ta}) = \sum_{j=0}^\infty (a_{j0} r^j \cos j\ta +a_{j1} r^j \sin j\ta) 
\in \bl_w$
and let $p_j $ be an increasing sequence of positive numbers such that $\lim_{j\rightarrow \infinity} p_j =\infinity $.
 Define $N(n) $ as the number of $\fa_j $ satisfying $j\le n $ and  $|\fa_j|\le \fraction{p_j } {jw (1 -1/j)\sqrt{j}}$. Then
$$\lim_{n\rightarrow \infinity} N (n)/n =1.$$
\end {prop}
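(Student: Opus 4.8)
The plan is to reduce Proposition \ref{fraction b} to Proposition \ref{fraction} by exploiting the duality between Bloch-type spaces and growth spaces already established in this section. As noted just before Corollary \ref{random improved}, a function $u$ lies in $\bl_w$ precisely when each partial derivative $\partial u/\partial x_i$ belongs to $\hvb$ for the weight $v(r)=1/w(r)$. In the disk this means that differentiating $u$ produces a harmonic function in the growth space with weight $g(j)$ comparable to $1/w(1-1/j)$, and the coefficients of the derivative are obtained from those of $u$ by multiplication by $j$. So the strategy is to apply the already-proven Proposition \ref{fraction} to a differentiated version of $u$ and then translate the resulting coefficient statement back to $u$.

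\textbf{Key steps.} First I would form the harmonic function $\tilde u$ whose coefficient vectors are $\tilde{\fa}_j = j\,\fa_j$ (concretely, apply the radial derivative or $\partial_\theta$ to pass from $\cos j\theta,\sin j\theta$ terms to the companion series with an extra factor $j$), and observe that $u\in\bl_w$ forces $\tilde u\in h^\infty_v(\D)$ with $v=1/w$; here the doubling hypothesis \eqref{wdouble} on $w$ is exactly the condition that makes $v$ satisfy \eqref{vdouble}, so that the growth-space machinery applies and $g(j)\simeq 1/w(1-1/j)$. Second, I would apply Proposition \ref{fraction} to $\tilde u$ with the \emph{same} sequence $p_j$: that proposition says the number $\tilde N(n)$ of indices $j\le n$ with $|\tilde{\fa}_j|\le p_j g(j)/\sqrt{j}$ satisfies $\tilde N(n)/n\to 1$. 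Third, I would unwind the inequality: $|\tilde{\fa}_j|\le p_j g(j)/\sqrt j$ reads $j|\fa_j|\le p_j\,(1/w(1-1/j))/\sqrt j$, i.e. $|\fa_j|\le p_j/(j\,w(1-1/j)\sqrt j)$, which is exactly the condition defining $N(n)$ in the statement. Hence $N(n)=\tilde N(n)$ and the conclusion $\lim_{n\to\infty}N(n)/n=1$ follows immediately.

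\textbf{Main obstacle.} The only genuine point requiring care is the first step: verifying that passing to the derivative is legitimate at the level of individual coefficients and that the weight identification $g(j)\simeq 1/w(1-1/j)$ holds with the doubling condition intact. Once one checks that $w$ satisfies \eqref{wdouble} is equivalent to $v=1/w$ satisfying the doubling condition \eqref{vdouble} (so that the auxiliary function $g$ is well-defined and $g(j)=v(1-1/j)=1/w(1-1/j)$), everything else is a purely formal substitution into Proposition \ref{fraction}. The estimate \eqref{sq} underlying Proposition \ref{fraction} transfers verbatim because it was proved for general functions in $\hv$, and $\tilde u$ is such a function. I therefore expect the proof to be short, consisting essentially of the sentence ``apply Proposition \ref{fraction} to the coefficients $j\fa_j$ with weight $1/w$'' together with the bookkeeping that identifies the two coefficient conditions.
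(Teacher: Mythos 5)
Your proposal is correct and takes essentially the same route as the paper: the paper offers no separate argument for Proposition \ref{fraction b}, remarking only that Proposition \ref{fraction} ``can also be applied to Bloch-type functions in the disk,'' and the intended reduction is exactly your substitution of the derived coefficients $j\fa_j$ (via $\partial_\theta u$ or the radial derivative, both of which avoid any index shift) together with the weight $v=1/w$ and the observation that \eqref{wdouble} for $w$ is \eqref{vdouble} for $v$.
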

This generalizes Corollary 2 in \cite {AK}, which is proved for Bloch functions.

\subsection {Analytic growth spaces and Bloch-type spaces}
Let $A_v^\infinity $ denote the space of analytic functions on $\D$ which fulfill $|u(z)|\le Kv(|z|)$ for some $K $, as mentioned in the introduction. We can prove a result similar to Theorem \ref {improved d} in this case as well, and this generalizes Theorem B.
The proof is similar to the proof of Theorem \ref {improved d}; we apply Theorem~C with F equal to the set of complex trigonometric polynomials.

\begin {thrm}
\label {improved analytic}
Let  $\x=\{\x_{m}\}$  be a subnormal sequence. If there exists an increasing sequence $\{n_k\} $ of positive  integers such that  for all $k $ we have
$g (n_{k+1})\le C_1 g (n_k) $  and
$$\sum_{j=1}^k \sqrt{\left(\sum_{m=n_{j-1}+1}^{n_j}|a_m |^2 
\right)
\log n_{j}}\le C_2 g (n_{k}), $$
then $u (z,\x) = \sum_{m=0}^\infty a_{m} \x_{m} z^m  \in A_v^\infinity$ almost surely.
\end {thrm}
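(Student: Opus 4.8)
The plan is to follow the proof of Theorem \ref{improved d} essentially verbatim, replacing the sphere $S^d$ by the circle $S^1$, the real spherical harmonics $Y_{ml}$ by the exponentials $z^m=e^{im\ta}$, and the multidimensional Bernstein inequality by the classical Bernstein inequality for trigonometric polynomials. The substitution is harmless, and only one point requires genuine care: the building blocks are now complex-valued rather than real.

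First I would set $S_n(\ta,\x)=\sum_{m=0}^n a_m\x_m e^{im\ta}$ and $M_n(\x)=\max_\ta|S_n(\ta,\x)|$, and for $n_{j-1}<N\le n_j$ form the block $Q_N(\ta,\x)=S_N-S_{n_{j-1}}$ with $\M_N(\x)=\max_\ta|Q_N|$. I take $F$ to be the space of complex trigonometric polynomials of degree at most $N$, which is closed under complex conjugation, and verify the concentration hypothesis of Theorem C for its real members: a real trigonometric polynomial of degree $\le N$ satisfies $\|P'\|_\infty\le N\|P\|_\infty$, so by the Salem--Zygmund argument (the case $d=1$ of Lemma \ref{several dimensions}) the set where $|P|\ge\frac12\|P\|_\infty$ contains an arc of length $\gtrsim 1/N$, whence $\rho\lesssim N$ will do. Applying Theorem C to $Q_N$ with $f_m=a_m e^{im\ta}$ (hence $\|f_m\|_\infty=|a_m|$) and $\kappa=2N^2$, and observing that $\log(2\rho\kappa)\lesssim\log N$, I obtain $\pr(E_N)\le 1/N^2$ for the event $E_N=\{\M_N\ge K_1(\sum_{m=n_{j-1}+1}^N|a_m|^2\log N)^{1/2}\}$, with $K_1$ chosen exactly as in Theorem \ref{improved d}.

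Next, Borel--Cantelli produces, for a.e. $\x$, a threshold $J(\x)$ past which $\M_N(\x)\le K_1(\sum_{m=n_{j-1}+1}^N|a_m|^2\log N)^{1/2}$. Summing this over the blocks $J\le j\le k$ for $n_{k-1}<L\le n_k$ and using the two hypotheses — the telescoping bound $\le C_2 g(n_k)$ together with $g(n_k)\le C_1 g(n_{k-1})$ — gives $M_L(\x)\le B_\x+C_3 g(L)$ for a suitable random constant $B_\x$ and all $L$, just as in Theorem \ref{improved d}. Then summation by parts (the boundary term $r^nS_n$ vanishing as $n\to\infty$ because $g$ grows at most polynomially by doubling) yields $|u(re^{i\ta},\x)|\le B_\x+C_3(1-r)\sum_{k=0}^\infty g(k)r^k$, and the weighted geometric sum is bounded by $C_4 g(N)\simeq v(r)$ for $1-1/(N-1)<r\le 1-1/N$ through the doubling estimates \eqref{to n} and \eqref{from n}. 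Hence $|u(z,\x)|\le B_\x+C_3C_4\,v(|z|)$ almost surely, i.e. $u\in A_v^\infinity$ almost surely.

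The main — and essentially only — obstacle beyond transcribing Theorem \ref{improved d} is justifying the application of Theorem C to complex building blocks. This is resolved by the structure of Theorem C itself: the concentration property is demanded only of the real elements of $F$, and since $\mathrm{Re}(e^{-i\psi}Q_N)\in F$ whenever $F$ is conjugation-closed, the classical real-variable Bernstein inequality suffices and no new concentration estimate is needed.
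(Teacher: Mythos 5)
Your proposal is correct and follows exactly the route the paper intends: the paper's own proof of this theorem consists of the single remark that one repeats the argument of Theorem \ref{improved d}, applying Theorem C with $F$ equal to the set of complex trigonometric polynomials. Your filled-in details — the classical Bernstein inequality giving $\rho\lesssim N$ for real members of the conjugation-closed space $F$, the block decomposition, Borel--Cantelli, and the summation-by-parts/doubling estimates — are precisely the intended transcription, and your observation that Theorem C only requires the concentration hypothesis for real elements of $F$ correctly handles the one genuinely new point.
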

A result similar to Corollary \ref {induction} follows easily. We can also apply Theorem \ref {improved analytic} to get  results similar to Corollary \ref {random improved} and Corollary \ref {induction b} for analytic Bloch-type spaces.


\section*{Acknowledgements}
I wish to thank my Ph.D. advisor Eugenia Malinnikova for suggesting this problem and
for valuable discussions. I also thank the referee for useful suggestions.

\end{document}